\documentclass{amsart}
\usepackage{amsthm, amssymb, amsfonts, amscd}
\usepackage{graphicx}
\usepackage{xcolor}
\usepackage[colorlinks = true,
linkcolor = black,
urlcolor = blue,
citecolor = black]{hyperref}
\usepackage[margin=1.0in]{geometry}
\usepackage[T1]{fontenc}
\allowdisplaybreaks
\DeclareMathOperator{\Prep}{Prep}
\DeclareMathOperator{\prim}{prim}
\newcommand{\bb}{\mathbb}
\newcommand{\mf}{\mathbf}
\newcommand{\mc}{\mathcal}
\newcommand{\bZ}{\bb{Z}}
\newcommand{\bQ}{\bb{Q}}
\newcommand{\bR}{\bb{R}}
\DeclareMathOperator{\vol}{vol}
\newcommand{\cB}{\mc{B}}

\newcommand{\cP}{\mc{P}}
\newcommand{\ve}{\varepsilon}
\newcommand{\vt}{\vartheta}

\theoremstyle{definition}
\newtheorem{theorem}{Theorem}[section]

\newtheorem{proposition}[theorem]{Proposition}
\newtheorem{lemma}[theorem]{Lemma}

\begin{document}

\title[Average rational preperiodic points]{The average number of rational preperiodic points of polynomials over $\bQ$}
\author{Jungin Lee}
\date{}
\address{J. Lee -- Department of Mathematics, Ajou University, Suwon 16499, Republic of Korea}
\email{jileemath@ajou.ac.kr}

\begin{abstract}
Let $M_d(X)$ denote the average number of rational preperiodic points among degree-$d$ polynomials over $\mathbb{Q}$ with vanishing $z^{d-1}$ coefficient, constant term $1$, and height at most $X$. We prove that, for every integer $d\ge2$, $M_d(X) \sim \gamma_d X^{-1}$ for an explicit constant $\gamma_d>0$. For $d\ge4$, the error term is $O_{d,\varepsilon}(X^{-2+\varepsilon})$ for every $\varepsilon>0$; for $d=3$ the error term is $O(X^{-3/2})$; and for $d=2$ the error term is $O(X^{-3/2}\log X)$.
\end{abstract}
\maketitle

%------------------------------------------------
%------------------------------------------------
\vspace{-7mm}
\section{Introduction}

The dynamical uniform boundedness conjecture of Morton and Silverman \cite{MS94} predicts that, for each integer $d\ge2$, the number of rational preperiodic points of a degree-$d$ polynomial over $\bQ$ is bounded by a constant depending only on $d$. This conjecture remains open even for $d=2$.
Following the statistical approach of Le Boudec and Mavraki \cite{BM21}, we instead order polynomials by height and study the number of rational preperiodic points on average.
This perspective allows us to measure how frequently rational preperiodic points occur among polynomials of bounded height and, more precisely, how their average number varies as the height bound tends to infinity.

For nonnegative functions $f(X)$ and $g(X)$, we write $f(X) \ll_{\alpha_1, \ldots, \alpha_k} g(X)$ (or $g(X) \gg_{\alpha_1, \ldots, \alpha_k} f(X)$) if $f(X)\le Cg(X)$ for all sufficiently large $X$, where the constant $C>0$ depends only on $\alpha_1, \ldots, \alpha_k$. 
For a positive integer $n$, let $\tau(n)$ be the number of positive divisors of $n$. 

For an integer $d\ge2$, let 
\begin{equation*}
\bZ_{\prim}^d := \{ (x_1,\ldots,x_d)\in\bZ^d : \gcd(x_1,\ldots,x_d)=1 \}
\end{equation*}
be the set of primitive vectors in $\bZ^d$. 
For $\mf{a}=(a_d,a_{d-2},\ldots,a_0) \in \bZ_{\prim}^d$ with $a_0\ne0$, define
\begin{equation*}
\psi_{\mf{a}}(z) := \frac{a_d}{a_0}z^d+\frac{a_{d-2}}{a_0}z^{d-2}+\cdots+\frac{a_1}{a_0}z+1 \in \bQ[z]
\end{equation*}
and
\begin{equation*}
\cP_d := \left\{ \psi_{\mf{a}} \in\bQ[z] :
\mf{a}=(a_d,a_{d-2},\ldots,a_0)\in\bZ_{\prim}^d, \, a_d\ne0, \, a_0>0 \right\}.
\end{equation*}
Define the height of a polynomial $\psi_{\mf{a}} \in \cP_d$ by
\begin{equation*}
H(\psi_{\mf{a}}) := \max(|a_d|,|a_{d-2}|,\ldots,|a_1|,a_0).
\end{equation*}
For a real number $X \ge1$, set
\begin{equation*}
\cP_d(X)=\{\psi\in\cP_d:H(\psi)\le X\}.
\end{equation*}

For a self-map $f$ on a set $S$ and an integer $n\ge0$, write $f^n$ for the $n$-th iterate of $f$, with $f^0$ the identity map. For $f\in\bQ[z]$, let $\Prep_{\bQ}(f)$ denote the set of rational preperiodic points of $f$; that is,
\begin{equation*}
\Prep_{\bQ}(f)=\{z\in\bQ: f^m(z)=f^n(z) \text{ for some integers } m>n\ge0 \}.
\end{equation*}
Set
\begin{equation*}
A_d(X)=\sum_{\psi\in\cP_d(X)}\#\Prep_{\bQ}(\psi),
\quad M_d(X)=\frac{A_d(X)}{\#\cP_d(X)}.
\end{equation*}

Although the dynamical uniform boundedness conjecture remains open, Le Boudec and Mavraki \cite{BM21} obtained a quantitative upper bound for the average number of rational preperiodic points among polynomials in $\cP_d(X)$.
\begin{theorem} \label{thm1a}
(\cite[Theorem 1.1]{BM21}) Let $\vt_2 = 1/2$ and $\vt_d=\frac{2(d+1)}{5d+1}$ for each $d \ge 3$. Then for every $\ve>0$,
\begin{equation*}
M_d(X)\ll_{d,\ve}X^{-\vt_d+\ve}.
\end{equation*}
\end{theorem}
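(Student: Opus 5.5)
The plan is to bound $A_d(X)$ directly by classifying rational preperiodic points through $p$-adic/denominator considerations, and then to count integer tuples. First note that $\#\cP_d(X)\asymp_d X^d$. Write a rational point as $z=x/y$ with $\gcd(x,y)=1$ and $y>0$.

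\textbf{Step 1: denominators of preperiodic points.} For a prime $p$, let $\mathbf{a}$ have $v_p(a_d/a_0)=-e$.
\begin{itemize}
\item If $e\le 0$ and $v_p(a_i/a_0)\ge0$ for all $i$, then $\psi_{\mathbf a}$ has good reduction at $p$. Preperiodic points are then $p$-integral, since a point with $v_p(z)<0$ escapes to infinity.
\item In general, a standard Newton-polygon/escape argument shows that if $v_p(z)$ is too negative, then $|\psi^n(z)|_p\to\infty$.
\end{itemize}
The denominator of a preperiodic point is therefore controlled by the denominators of the coefficients. In particular, $y^{d}$ essentially divides $a_d\cdot(\text{stuff})$. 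Multiplying the relation $\psi(z)=z'$ through by $a_0 y^d$ gives
\[ a_dx^d+a_{d-2}x^{d-2}y^2+\cdots+a_1xy^{d-1}+a_0y^d\equiv 0 \pmod{\,\cdot\,}. \]
Combined with the fact that the orbit lies in a set of bounded height relative to $H(\psi)$ (archimedean escape gives $|x|,y\ll X^{O(1)}$), we get that every preperiodic point satisfies $\max(|x|,y)\ll_d X^{c}$ for an explicit $c$. Moreover $z$ and $\psi(z)$ are both preperiodic, so both have controlled height.

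\textbf{Step 2: counting via the pair $(z,\psi(z))$.} For each fixed pair of rationals $z=x/y$ and $w=\psi(z)=u/v$, the condition $\psi_{\mathbf a}(z)=w$ is one linear equation in $\mathbf a\in\bZ^d$:
\[ v\bigl(a_dx^d+a_{d-2}x^{d-2}y^2+\cdots+a_0y^d\bigr)=u\,a_0y^d. \]
The number of $\mathbf a$ with $|\mathbf a|\le X$ on this hyperplane is $\ll X^{d-1}/\|\text{normal}\|+X^{d-2}$ by a lattice point count (Davenport). Using the denominator divisibility from Step 1, I will restrict the relevant $(x,y,u,v)$ and sum. The pairs with small height contribute $X^{d-1}$, which already gives $M_d\ll X^{-1}$ heuristically. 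The $\varepsilon$-loss arises from divisor-function bounds $\tau(n)\ll n^\varepsilon$ when counting factorizations of $a_d$ (or of $a_0$) compatible with the denominators.

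\textbf{Step 3: large heights.} The main obstacle is that tuples $\mathbf a$ with a preperiodic point of large height, comparable to $X^{1/d}$ or larger, are not handled efficiently by the hyperplane count, because the sum over $(z,w)$ diverges. For these I would instead parametrize by $z$ alone. I would use that $z$ lies in a bounded archimedean disc of radius $\ll (|a_0/a_d|)^{1/(d-1)}$ and that $y^d\mid a_d$ up to bounded factors, and count $\mathbf a$ with $a_d$ divisible by $y^d$: there are $\ll X^d/y^d$ choices, with $\ll X^{1/d}$ numerators per denominator.

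Balancing the two regimes at a threshold $Y$ yields a bound $X^{d-\vartheta_d+\varepsilon}$. Optimizing the threshold, one expects exactly the exponents $\vartheta_d=2(d+1)/(5d+1)$ for $d\ge3$ and $1/2$ for $d=2$ (where the quadratic case uses the explicit form $z^2+c$ structure). Finally, I divide by $\#\cP_d(X)\asymp X^d$ to conclude.
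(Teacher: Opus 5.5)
Your proposal has a genuine gap. Step 3, which is where any power saving would have to come from, rests on two false inputs, and the final balancing is never carried out.

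First, the denominator constraint is $y^2\mid a_d$ (Lemma~\ref{lem2a}), not ``$y^d\mid a_d$ up to bounded factors''. For $y\ge2$ and $\mathbf a=(y^2,-y,1)$, the polynomial $\psi_{\mathbf a}(z)=y^2z^3-yz+1\in\cP_3(y^2)$ fixes $1/y$, and $y^3/a_3=y$ is unbounded. In general, the factorization \eqref{eq2j} gives $a_d=ky^2$ at any fixed point $x/y$, with $k=1$ allowed.

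Second, the escape radius is not $\ll(|a_0/a_d|)^{1/(d-1)}$, because the middle coefficients matter. For $x\ge2$, $\mathbf a=(1,x-1-x^2,x)$ gives a cubic fixing $z=x$, while $(a_0/a_3)^{1/2}=x^{1/2}$. The correct radius is $d(X/|a_d|)^{1/2}$.

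With the true constraints, your Step 3 count becomes $\sum_y\sum_{k\le X/y^2}X^{d-1}(X/k)^{1/2}\asymp X^d\sum_y y^{-1}$, which saves nothing. As you note yourself, Step 2 cannot take over. Summing the Davenport error term $X^{d-2}$ you state over the $\asymp Y^4$ pairs $(z,w)$ of height at most $Y$ costs $X^{d-2}Y^4$. That is $o(X^d)$ only for $Y=o(X^{1/2})$, whereas the example above with $X=y^2$ has a preperiodic point of height $X^{1/2}$. Finally, no bounds are written down in terms of the threshold, so the exponent $2(d+1)/(5d+1)$ is asserted rather than derived, and $d=2$ is not treated.

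The missing idea is to organize the count around the common denominator forced by $a_d$, not around the height of $z$. (The paper itself only quotes this statement from Le Boudec--Mavraki. It is superseded by Theorem~\ref{thm1b}, which gives $M_d(X)\ll_d X^{-1}$, and $\vt_d\le 1/2$.)

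A direct argument goes as follows. Write $a_d=\sigma mq^2$ with $m$ squarefree. By Lemma~\ref{lem2a}, every preperiodic $z$ and its image have the form $u/q$ and $v/q$ with $|u|,|v|\le d(X/m)^{1/2}$, as in \eqref{eq2a}. Fixing $(\sigma,m,q,u,v)$ with $u\ne0$ fixes $a_d$, and $\psi_{\mathbf a}(u/q)=v/q$ becomes
\[
\sigma mu^d+\sum_{i=1}^{d-2}a_iq^{d-i-2}u^i+a_0q^{d-3}(q-v)=0.
\]
Here $a_{d-2}$ has coefficient $u^{d-2}\ne0$, so at most $(2X+1)^{d-2}$ vectors $\mathbf a$ occur. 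Summing gives $\ll_d X^{d-2}\sum_{m\le X}(X/m)^{3/2}\ll X^{d-1/2}$. The point $0$ at most doubles this, since if $0$ is preperiodic then so is $1=\psi_{\mathbf a}(0)$. By \eqref{eq1b}, $M_d(X)\ll_d X^{-1/2}$ for $d\ge3$.

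For $d=2$, pass to $z^2+c$ with $c=a/q^2$ (Lemma~\ref{lem3a}). The triple $(q,x_0,x_1)$ with $z=x_0/q$ and $f_c(z)=x_1/q$ determines $(c,z)$ through $a=qx_1-x_0^2$. Lemma~\ref{lem3b} leaves $O(X^{3/2})$ such triples, so $M_2(X)\ll X^{-1/2}$.

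The paper's Proposition~\ref{prop2b} starts from exactly this setup and adds a third orbit point and a divisor bound to reach the sharper estimates.
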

We note that their proof \cite[p. 18]{BM21} actually gives a better upper bound $M_2(X)\ll X^{-1/2}\log X$.

For $c=a/b\in\bQ$ written in lowest terms with $b>0$, let $H(c):=\max(|a|,b)$. For the quadratic family $f_c(z)=z^2+c$, the remainder term $\mc{R}(X)$ in the work of Olechnowicz \cite[Theorem 8.1]{Ole26} satisfies
\begin{equation*}
\mc{R}(X)=\sum_{\substack{c\in\bQ\\H(c)\le X}}\#\Prep_{\bQ}(f_c).
\end{equation*}
Indeed, Olechnowicz counts preperiodic points on $\mf{P}^1(\bQ)$ and subtracts the contribution of the fixed point $\infty$. By the bijection explained at the beginning of Section \ref{Sec3}, the contribution from $c\ne0$ is exactly $A_2(X)$. Since $\Prep_{\bQ}(f_0)=\{-1,0,1\}$, we have $\mc{R}(X)=A_2(X)+3$. Thus \cite[Theorem 8.1]{Ole26} gives
\begin{equation} \label{eq1a}
X \ll A_2(X)\ll X^{3/2}\log\log X\log\log\log X.
\end{equation}
Moreover, \cite[(2.20)]{BM21} gives
\begin{equation} \label{eq1b}
\#\cP_d(X)=\frac{2^{d-1}}{\zeta(d)}X^d+O_d(X^{d-1}\log X)
\end{equation}
for every $d \ge 2$, where $\zeta$ denotes the Riemann zeta function. By \eqref{eq1a} and \eqref{eq1b}, we have 
\begin{equation*}
M_2(X)\ll X^{-1/2}\log\log X\log\log\log X.
\end{equation*}

Le Boudec and Mavraki conjectured that for every integer $d\ge 2$, there exists a constant $\gamma_d>0$ such that $M_d(X)\sim \gamma_dX^{-1}$; see \cite[(1.3)]{BM21}. The purpose of this paper is to prove this conjecture, with a power-saving error term.

\begin{theorem} \label{thm1b}
For every integer $d\ge2$, there is a constant $\gamma_d>0$ such that
\begin{equation*}
M_d(X)=\frac{\gamma_d}{X}+\begin{cases}
O(X^{-3/2}\log X),&d=2,\\
O(X^{-3/2}),&d=3,\\
O_{d,\ve}(X^{-2+\ve}),&d\ge4.
\end{cases}
\end{equation*}
In the case $d\ge4$, the estimate holds for every $\ve>0$.
For $d\ge3$, the constant $\gamma_d$ is given by the absolutely convergent
series in \eqref{eq1c} below, while
\begin{equation*}
\gamma_2=\frac{1+\sqrt{5}}2
+2\log\left(\frac{1+\sqrt{5}}2\right)
+\frac{2\pi}{3\sqrt{3}}.
\end{equation*}
\end{theorem}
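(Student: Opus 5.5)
Since $\#\cP_d(X)=\frac{2^{d-1}}{\zeta(d)}X^d+O_d(X^{d-1}\log X)$ by \eqref{eq1b}, the claim reduces to an asymptotic $A_d(X)=c_dX^{d-1}+(\text{error})$ with $\gamma_d=c_d\zeta(d)/2^{d-1}$. The plan is to understand which rational points can be preperiodic for $\psi_{\mf a}$, using the normalization (constant term $1$, $a_0>0$) to force strong integrality.

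\emph{Step 1: $p$-adic constraints.} Write a preperiodic point as $z=u/v$ in lowest terms. A standard $p$-adic argument applies to $\psi_{\mf a}=a_0^{-1}(a_dz^d+\cdots+a_1z+a_0)$: if $|z|_p$ is too large relative to the coefficients, the orbit escapes to infinity $p$-adically. This should show that every preperiodic $z$ has the property that $v^d$ "essentially divides" $a_d$ and that $u$ divides $a_0$ (via $v^d\psi(z)=\dots$). Heuristically, preperiodic points of $\psi_{\mf a}$ are rare, and most contributing polynomials have a single special structure, for example $\psi(z_0)=z_0$ or a short orbit through $0$ or through the points $\pm1$. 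Indeed $\psi(0)=1$, so $0$ is preperiodic iff $1$ is, and so on.

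\emph{Step 2: identify the main families.} I expect the main term $X^{d-1}$ to come from configurations imposing exactly one linear condition on $\mf a$. An example is $z$ being a fixed point, $\psi_{\mf a}(z)=z$, which is one linear equation $a_du^d+a_{d-2}u^{d-2}v^2+\cdots+a_0v^d=a_0uv^{d-1}$ in $\mf a$. Similarly, a point mapping into a small fixed cycle is such a configuration. For each fixed primitive $(u,v)$, the count of primitive $\mf a$ of height $\le X$ on this hyperplane is asymptotically $\vol\cdot X^{d-1}/(\text{covolume})$ by lattice point counting with Möbius inversion. Summing over $(u,v)$ gives a series in $(u,v)$ with terms decaying like $1/\|(u,v)\|^{d}$ or so. This sum converges absolutely for $d\ge3$, producing the series \eqref{eq1c}. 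For $d=2$, the relevant volumes can be computed in closed form. I expect the sum over $z=u/v$ together with the geometry of the region $\{|a_i|\le X\}$ to yield the golden-ratio terms, through the roots of $z^2-z-1$ bounding where fixed points lie, and the $\pi/(3\sqrt3)$ term, through a sum like $\sum 1/(u^2+uv+v^2)$-type densities, i.e.\ $2$-cycles. Each point is counted with its multiplicity in its orbit, e.g.\ preimages of the fixed point.

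\emph{Step 3: error terms.} Three sources of error must be controlled.
\begin{enumerate}
\item Lattice point errors for each hyperplane, of size $O(X^{d-2}\cdot\text{stuff})$ summed over $(u,v)$.
\item Tails of the $(u,v)$ sum beyond a cutoff.
\item Polynomials possessing preperiodic points not in the main configurations, which impose at least two independent conditions and should contribute $O(X^{d-2+\ve})$. Divisor-function bounds $\tau(n)\ll n^\ve$ give the $\ve$ loss when counting the choices of $(u,v)$ dividing coefficients.
\end{enumerate}
For $d=3$ and $d=2$, the codimension-two count and the tail are weaker, giving $X^{d-3/2}$. The $\log X$ for $d=2$ arises from a borderline harmonic sum.

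\emph{Main obstacle.} The hardest step is the uniform bound for the non-generic contributions. One must show that a polynomial with two "independent" preperiodic points, or a long orbit, costs two powers of $X$ uniformly, while also bounding the total number of preperiodic points per polynomial, using Theorem~\ref{thm1a}-style arguments. I would first use the $p$-adic divisibility constraints to reduce to pairs of points with bounded-denominator structure. I would then count the resulting codimension-two lattices using determinant (covolume) lower bounds.
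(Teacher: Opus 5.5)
\textbf{There is a genuine gap: the step you label the ``main obstacle'' is where essentially all the work lies, and you give no workable mechanism for it.} Your main term for $d\ge3$ (one hyperplane $\ell_d(x,y)\cdot\mf a=0$ per fixed point $r=x/y$, lattice counting, a cutoff in $H(r)$, Möbius inversion) matches the paper.

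\emph{The error count for $d\ge3$.} First, the $p$-adic constraint for $z=x/y$ is $y^2\mid a_d$ and $|x|\le dy(X/|a_d|)^{1/2}$. It is not $y^d\mid a_d$, and $x\mid a_0$ is only guaranteed for fixed points. Write $a_d=\sigma mq^2$. Three consecutive orbit points $u/q,v/q,w/q$ do impose two linear conditions. But the admissible tuples $(m,q,u,v,w)$ number up to $\sum_{m\le X}(X/m)^2\asymp X^2$. With the $d-3$ remaining free coefficients, the trivial count is therefore $O(X^{d-1})$, the size of the main term. So ``two conditions cost two powers of $X$'' gives no saving: the freedom in the orbit points eats it.

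The paper's extra input is arithmetic. Eliminating $a_1$ gives $a_0q^{d-3}D=N$, where $D=q^2\Delta_f(z)$ with $\Delta_f(z)=z(f^2(z)-1)-f(z)(f(z)-1)$, and $N$ is independent of $w$ (and of $q$ when $d=3$). So for $D\ne0$ one picks $D$ among the $O_\ve(X^\ve)$ divisors of $N$, which determines $w$ and $(a_0,a_1)$. Meanwhile $q$ is pinned down by a congruence modulo $u$ when $d=3$, or by $q\mid c_{j_0}$ when $d\ge4$. This gives $\sum_fD_f\ll X^{d-2+\ve}$.

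\emph{The degenerate points, where the main term lives.} You must then classify the points with $\Delta_f=0$. If $\Delta_f=0$ along an orbit avoiding $0$, then $t_f(z)=(f(z)-1)/z$ is constant on it. So the orbit is a finite orbit of $z\mapsto\lambda z+1$. For $\lambda\ne0$ that is a fixed point or a $2$-cycle $\{z,1-z\}$, and $\lambda=0$ means $f(z)=1$. If $0$ is preperiodic, the same argument applied to the orbit of $1$ forces $f(1)\in\{0,1\}$. Combine this with $\#\Prep_{\bQ}(f)\le(d-1)(D_f+3d)+1$, which disposes of every $f$ with $D_f>0$. Then $\#\Prep_{\bQ}(f)=F_f+\eta_dI_1(f)+2I_0(f)$ up to $O(D_f+R_f+T_f+U_f)$.

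Without such a classification you cannot identify $\gamma_d$. Your claim that ``a point mapping into a small fixed cycle'' is a one-condition configuration is false in general: a rational preimage of a fixed point costs a second condition. The only extra one-condition families are $f(1)=1$ and $f(1)=0$. The first adds the point $0$, and also $-1$ when $d=3$, whence $\eta_3=2$. The second gives the $2$-cycle $\{0,1\}$. These produce the terms $\eta_dV_d(\ell_d^{(1)})+2V_d(\ell_d^{(0)})$ in $\gamma_d$.

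\emph{The case $d=2$.} Here your framework breaks down. There are only two coefficients, so each hyperplane $\ell\cdot\mf a=0$ contains $O(1)$ admissible primitive vectors, and there is no $X^{d-1}$ lattice count per point. After conjugating to $f_c(z)=z^2+c$, the main term comes instead from four one-parameter families of pairs $(c,z)$:
\[
c=z-z^2,\quad c=-z-z^2,\quad c=-z-z^2-1,\quad c=z-z^2-1.
\]
These are fixed points, preimages of fixed points, $2$-cycles, and preimages of $2$-cycles. Each family is counted as primitive lattice points in a dilated planar region. Your guesses about where $\varphi$ and $\pi/(3\sqrt3)$ come from are roughly right: they come from the areas of $\{0<v\le1,\ |u(v-u)|\le1\}$ and $\{0<v\le1,\ u^2+uv+v^2\le1\}$. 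But the preimage families contribute exactly as much as the fixed points and $2$-cycles themselves.

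The real difficulty is to show that all remaining pairs contribute $o(X)$ (the paper gets $O(\sqrt X)$), with no restriction on cycle or tail lengths. Olechnowicz obtained the asymptotic only by assuming no rational cycles of length $>3$, and nothing in your outline addresses this. The missing idea is a uniform count of rational points on an auxiliary variety.

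The paper maps $(c,z)$ injectively to a point $[q:x_0:x_1:x_6]$ of height $\le2\sqrt X$ on the degree-$32$ surface $q^{31}x_6=q^{32}G_6(x_0/q,x_1/q)$, which encodes seven consecutive orbit points. It shows that the only lines on this surface are two at infinity and the closures of the four families above, and that it contains exactly two geometrically integral conics. Salberger's bounds then give $O(B)$ rational points of height $\le B$ off the lines.
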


For the quadratic family $z^2+c$, Olechnowicz \cite[Theorem 8.1]{Ole26} obtained a lower bound for $A_2(X)$ whose main term, after division by $\#\cP_2(X)$, gives $\gamma_2X^{-1}$. He further obtained the corresponding asymptotic formula under the assumption that no quadratic polynomial over $\bQ$ has a rational cycle of length greater than $3$. The proof of Theorem \ref{thm1b} in the case $d=2$ does not require such an assumption.

We now describe the constant $\gamma_d$ for $d \ge 3$. Set 
\begin{equation*}
\cB_d=[-1,1]^{d-1}\times[0,1]\subset\bR^d.
\end{equation*}
For a primitive vector $\ell\in\bZ^d\setminus\{0\}$, define
\begin{equation*}
V_d(\ell)=\frac{\vol_{d-1}(\cB_d\cap\ell^\perp)}{\|\ell\|_2},
\end{equation*}
where $\vol_{d-1}$ denotes Euclidean volume on the hyperplane $\ell^\perp$ and $\|\ell\|_2$ is the Euclidean norm. For integers $x$ and $y$ with $x \ne 0$, $y \ge 1$ and $\gcd(x,y)=1$, set
\begin{equation*}
\ell_d(x,y)=(x^d,x^{d-2}y^2,\ldots,xy^{d-1},y^{d-1}(y-x)) \in \bZ^d.
\end{equation*}
Since $\gcd(x^d,y^{d-1}(y-x))=1$, the vector $\ell_d(x,y)$ is primitive. Moreover, $\psi_{\mf{a}}(x/y)=x/y$ if and only if $\ell_d(x,y)\cdot\mf{a}=0$.
Also set
\begin{equation*}
\ell_d^{(1)}=(1,\ldots,1,0)\in \bZ^d,\quad
\ell_d^{(0)}=(1,\ldots,1,1)\in \bZ^d,\quad
\eta_d=\begin{cases}2,&d=3,\\1,&d\ge4.\end{cases}
\end{equation*}
We use the notation $\ell_d^{(1)}$ and $\ell_d^{(0)}$ since for $\mf a=(a_d,a_{d-2},\ldots,a_1,a_0) \in \bZ_{\prim}^d$ with $a_0 \ne 0$, $\psi_{\mf a}(1)=1$ (resp. $\psi_{\mf a}(1)=0$) if and only if $\ell_d^{(1)}\cdot\mf a=0$ (resp. $\ell_d^{(0)}\cdot\mf a=0$). Then
\begin{equation} \label{eq1c}
\gamma_d=\frac{\zeta(d)}{2^{d-1}\zeta(d-1)}
\left( \sum_{\substack{x\ne0,\ y\ge1\\ \gcd(x,y)=1}} V_d(\ell_d(x,y))
+\eta_dV_d(\ell_d^{(1)})+2V_d(\ell_d^{(0)})
\right).
\end{equation}
The series is the contribution from rational fixed points. 
When $\psi(1)=1$, the term $\eta_dV_d(\ell_d^{(1)})$ accounts for the point $0$ and also for the point $-1$ if $d=3$. The term $2V_d(\ell_d^{(0)})$ accounts for the two points in the cycle $0\mapsto1\mapsto0$.

The paper is organized as follows. We prove Theorem \ref{thm1b} for the cases $d \ge 3$ and $d=2$ in Sections \ref{Sec2} and \ref{Sec3}, respectively. For $d\ge3$, Proposition \ref{prop2b} bounds the contribution from preperiodic points for which $\Delta_f(z) \ne 0$. Lemmas \ref{lem2c} and \ref{lem2d} then reduce the problem to counting rational fixed points and the contributions from the cases $\psi(1)=1$ and $\psi(1)=0$. 
We first count the relevant integral coefficient vectors without imposing primitivity and apply Möbius inversion after summing over the fixed points. For $d=2$, we conjugate $\psi_{\mf{a}}$ to $f_c(z)=z^2+c$ and use seven consecutive points in a rational preperiodic orbit to define a projective surface of degree $32$. We classify its geometric lines and geometrically integral conics, and apply Salberger's estimates away from these curves. Counting the four relevant lines then gives $\gamma_2$.

%------------------------------------------------
%------------------------------------------------
\section{Proof of Theorem \ref{thm1b}: the case \texorpdfstring{$d\ge3$}{d>=3}} \label{Sec2}

In this section, fix an integer $d\ge3$. Whenever $z\in\bQ$ is written as $z=x/y$, we assume that $x,y\in\bZ$, $y>0$, and $\gcd(x,y)=1$. For each prime $p$, let $v_p(n)$ denote the exponent of $p$ in a nonzero integer $n$, and extend $v_p$ to $\bQ$ in the usual way with $v_p(0)=\infty$. Set $|0|_p=0$ and $|u|_p=p^{-v_p(u)}$ for $u\in\bQ^\times$.

%------------------------------------------------
\subsection{Arithmetic constraints on rational preperiodic points}

\begin{lemma} \label{lem2a}
Let $\psi_{\mf{a}}\in\cP_d(X)$ for a real number $X \ge 1$ and $z=x/y\in\Prep_{\bQ}(\psi_{\mf{a}})\cap\bQ^\times$. Then $y^2\mid a_d$ and
\begin{equation*}
|x| \le dy\left(\frac{X}{|a_d|}\right)^{1/2}.
\end{equation*}
\end{lemma}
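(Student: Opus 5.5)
The plan is to treat the two claims separately. Each follows from an escape argument, one $p$-adic and one archimedean. In both cases the absence of the $z^{d-1}$ term is what makes the leading monomial dominate. Throughout, write $a_0\psi_{\mf a}(w)=a_dw^d+a_{d-2}w^{d-2}+\cdots+a_1w+a_0$. The only fact about preperiodic points I use is this: if $z\in\Prep_{\bQ}(\psi_{\mf a})$, then the forward orbit $\{\psi_{\mf a}^n(z)\}$ is finite. So it is bounded in every absolute value, and no absolute value can tend to infinity along it.

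\emph{Divisibility $y^2\mid a_d$.} Fix a prime $p$ with $e:=v_p(y)\ge1$, so $v_p(z)=-e$. I claim the following. If $w\in\bQ$ has $v_p(w)=-E$ with $E\ge1$ and $v_p(a_d)<2E$, then $v_p(\psi_{\mf a}(w))<-E$.

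Indeed, the degree-$d$ term has valuation $v_p(a_d)-dE$. Every other monomial $a_iw^i$ with $0\le i\le d-2$ has valuation at least $-iE\ge-(d-2)E$, since the $a_i$ are integers. The hypothesis gives $v_p(a_d)-dE<-(d-2)E$, so the leading term strictly dominates. Hence
\[
v_p(\psi_{\mf a}(w))=v_p(a_d)-dE-v_p(a_0)\le v_p(a_d)-dE<2E-dE\le -E,
\]
where the last step uses $d\ge3$.

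Iterating, the new exponent $E'>E$ still satisfies $v_p(a_d)<2E'$. So the $p$-adic absolute values along the orbit tend to infinity, contradicting preperiodicity. Applying the claim with $w=z$ and $E=e$ forces $v_p(a_d)\ge 2v_p(y)$ for every prime $p$, and therefore $y^2\mid a_d$.

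\emph{Archimedean bound.} Set $R=d(X/|a_d|)^{1/2}$. Since $|a_d|\le X$, we have $R\ge d\ge3$. Suppose $w\in\bR$ satisfies $|w|\ge R$. All $|a_i|\le X$, so the lower-order terms are bounded by
\[
X(|w|^{d-2}+\cdots+1)\le \tfrac32 X|w|^{d-2}.
\]
On the other hand $|a_d||w|^2\ge d^2X$, so the leading term is at least $d^2X|w|^{d-2}$. This gives
\[
|a_0\psi_{\mf a}(w)|\ge |a_d||w|^d\bigl(1-\tfrac{3}{2d^2}\bigr).
\]
Dividing by $a_0\le X$, we get
\[
|\psi_{\mf a}(w)|\ge \tfrac12\cdot\frac{|a_d||w|^2}{X}\,|w|^{d-2}\ge \tfrac{d^2}{2}|w|^{d-1}\ge \tfrac{d^2}{2}|w|,
\]
which is at least $2|w|$. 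So the orbit of any point with $|w|\ge R$ grows geometrically and escapes, again contradicting preperiodicity. Hence $|x/y|<R$, which is the stated inequality.

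I expect the main point of care to be the $p$-adic step. One has to check that the dominance of $a_dw^d$ persists under iteration, i.e.\ that the condition $v_p(a_d)<2E$ is self-propagating. One also has to track the $-v_p(a_0)$ shift, which only helps. This is exactly where the vanishing $z^{d-1}$ coefficient is used: it makes the threshold $2E$ rather than $E$. The archimedean part is a routine choice of constants.
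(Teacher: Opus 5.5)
Your proof is correct and follows the paper's argument: a $p$-adic escape showing that $v_p(a_d)<2v_p(y)$ would make the leading term dominate, so the $p$-adic absolute values grow without bound along the orbit, and an archimedean escape for $|z|$ beyond $R=d(X/|a_d|)^{1/2}$; only the constants differ. One small slip: your last display should give $\frac{|a_d||w|^2}{X}|w|^{d-2}\ge d^2|w|^{d-2}$, not $d^2|w|^{d-1}$ (the latter would need $|w|\ge d^2X/|a_d|$), but since $d\ge3$ and $|w|\ge3$ this still yields $|\psi_{\mf a}(w)|\ge \frac{d^2}{2}|w|\ge 2|w|$, so your conclusion stands.
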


\begin{proof}
Fix a prime $p\mid y$, and write $\mu=v_p(y)$, $\nu=v_p(a_d)$ and $\kappa=v_p(a_0)$. Suppose that $2\mu>\nu$. For every $i\in\{1,\ldots,d-2\}$ with $a_i\ne0$, we have
\begin{equation*}
v_p\left(\frac{a_i}{a_0}z^i\right)-v_p\left(\frac{a_d}{a_0}z^d\right)=v_p(a_i)-\nu+(d-i)\mu>0,
\end{equation*}
while $v_p((a_d/a_0)z^d)=\nu-\kappa-d\mu<0$. Hence 
\begin{equation*}
\left|\psi_{\mf{a}}(z)\right|_p=\left|\frac{a_d}{a_0}z^d\right|_p=p^{\kappa+d\mu-\nu}>p^\mu=|z|_p.
\end{equation*}
The same argument applies successively to the iterates of $z$, which implies that 
\begin{equation*}
|z|_p<\left|\psi_{\mf{a}}(z)\right|_p <\left|\psi_{\mf{a}}^2(z)\right|_p <\cdots.
\end{equation*}
This is impossible since $z$ is a preperiodic point of $\psi_{\mf{a}}$. Thus $2\mu\le\nu$ for every prime $p\mid y$, so $y^2\mid a_d$.

For the second assertion, set $R=d(X/|a_d|)^{1/2}>1$. Suppose that $|z|>R$. Then
\begin{equation*}
\sum_{i=1}^{d-2}|a_i||z|^i+a_0 \le dX|z|^{d-2} <\frac{|a_d|}{d}|z|^d.
\end{equation*}
Consequently,
\begin{equation*}
|\psi_{\mf{a}}(z)| >\frac{|a_d|}{a_0}|z|^d\left(1-\frac{1}{d}\right)
\end{equation*}
and $a_0 \le X$ so
\begin{equation*}
\frac{|\psi_{\mf{a}}(z)|}{|z|} 
>\left(1-\frac{1}{d}\right) \frac{|a_d|}{a_0}R^{d-1}
\ge\left(1-\frac{1}{d}\right)d^{d-1}
\left(\frac{X}{|a_d|}\right)^{(d-3)/2}>1.
\end{equation*}
The same argument applies successively to the iterates of $z$, which contradicts the assumption that $z$ is a preperiodic point of $\psi_{\mf{a}}$. Hence $|z|\le R$ and the asserted bound for $|x|$ holds.
\end{proof}

For $\psi_{\mf{a}}\in\cP_d(X)$, write $a_d=\sigma m q^2$, where $\sigma \in \{-1,1\}$, $m \ge 1$ is squarefree and $q \ge 1$. By Lemma \ref{lem2a}, every point in $\Prep_{\bQ}(\psi_{\mf{a}})$ has the form $u/q$, where
\begin{equation} \label{eq2a}
|u|\le d(X/m)^{1/2},\quad q\le(X/m)^{1/2}.
\end{equation}
We note that the fraction $u/q$ need not be reduced. For $f\in\cP_d$ and $z\in \Prep_{\bQ}(f)$, define
\begin{equation*}
\Delta_f(z)=z(f^2(z)-1)-f(z)(f(z)-1),
\quad
D_f=\#\{z\in \Prep_{\bQ}(f):\Delta_f(z)\ne0\}.
\end{equation*}

\begin{proposition} \label{prop2b}
For every $\ve>0$,
\begin{equation} \label{eq2b}
\sum_{f\in\cP_d(X)}D_f\ll_{d,\ve}X^{d-2+\ve}.
\end{equation}
\end{proposition}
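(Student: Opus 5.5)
The plan is to bound $\sum_f D_f$ by counting pairs $(f,z)$ with $z\in\Prep_{\bQ}(f)$ and $\Delta_f(z)\neq0$, and to exploit the fact that such a $z$ forces $\mf a$ into a codimension-$2$ sublattice of $\bZ^d$.

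\textbf{Linear conditions.} Fix such a pair $(f,z)$ with $f=\psi_{\mf a}$, and put $w=f(z)$, $v=f(w)$. All three points $z,w,v$ are preperiodic, so by Lemma~\ref{lem2a} and \eqref{eq2a} each has the form $u/q$ with $a_d=\sigma mq^2$ and $|u|\le d(X/m)^{1/2}$. The relations $f(z)=w$ and $f(w)=v$ are the two linear equations
\begin{equation*}
\lambda_1\cdot\mf a=0,\qquad \lambda_2\cdot\mf a=0,
\end{equation*}
where
\begin{equation*}
\lambda_1=(z^d,z^{d-2},\ldots,z,1-w),\qquad \lambda_2=(w^d,w^{d-2},\ldots,w,1-v).
\end{equation*}
The last two coordinates of these vectors form the $2\times2$ minor
\begin{equation*}
z(1-v)-w(1-w)=-\Delta_f(z).
\end{equation*}
This minor is nonzero by hypothesis, so $\lambda_1,\lambda_2$ are linearly independent. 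Hence $\mf a$ lies in the rank-$(d-2)$ lattice
\begin{equation*}
\Lambda(z,w,v)=\{\mf b\in\bZ^d:\lambda_1\cdot\mf b=\lambda_2\cdot\mf b=0\}.
\end{equation*}
This is exactly where the hypothesis $\Delta_f(z)\ne0$ is used. The pair $(f,z)$ determines the triple $(z,w,v)$, so
\begin{equation*}
\sum_f D_f\le\sum_{(z,w,v)}\#\bigl(\Lambda(z,w,v)\cap[-X,X]^d\bigr),
\end{equation*}
where the sum runs over triples of the allowed shape with $\Delta\neq0$.

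\textbf{Lattice counting.} Clearing denominators, I will write the integer vectors $q^d\lambda_1,q^d\lambda_2$, whose entries are polynomials in $u_z,u_w,u_v,q$. For the number of lattice points in the box I will use the standard Davenport/Schmidt-type bound
\begin{equation*}
\#(\Lambda\cap[-X,X]^d)\ll_d \sum_{j=0}^{d-2}\frac{X^j}{\mu_1\cdots\mu_j},
\end{equation*}
where $\mu_1\le\cdots\le\mu_{d-2}$ are the successive minima of $\Lambda$ and the $j=0$ term is $1$. Because $\Lambda$ is primitive, $\det\Lambda$ equals the covolume of the saturation of $\langle q^d\lambda_1,q^d\lambda_2\rangle$, i.e.\ $\|q^d\lambda_1\wedge q^d\lambda_2\|$ divided by a gcd-type index. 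The coordinates of this wedge product include $q^{2d}\Delta$ and products such as $u_z^iq^{d-i}\cdot(\ldots)$, which are large when $u_z$ or $u_w$ is large.

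\textbf{Summation.} I will first fix $(\sigma,m,q)$, then sum over $u_z,u_w,u_v$ in the ranges \eqref{eq2a}, using the main term $X^{d-2}/\det\Lambda$. The aim is to show
\begin{equation*}
\sum_{u_z,u_w,u_v}\frac{1}{\det\Lambda}\ll_\ve X^{\ve}\cdot(\text{something summable over }m,q).
\end{equation*}
The mechanism is that $\det\Lambda\gg$ (height of $z$)$\cdot$(height of $w$)-type quantities, up to the gcd index. Given $(z,w)$ and $f$, the value $v$ is determined, so effectively the sum is over $(z,w)$ only, with $v$ costing nothing. The gcd indices will be handled by a divisor-function argument, which is where the loss $X^\ve$ (via $\tau(n)\ll n^\ve$) enters. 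The sum over $q\le(X/m)^{1/2}$ and squarefree $m\le X$ should then produce at most $X^{\ve}$ in total.

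\textbf{Main obstacle.} I expect the hardest step to be controlling the lower-order terms $X^j/(\mu_1\cdots\mu_j)$ for $j<d-2$ when $\Lambda$ is skewed, i.e.\ has a very short vector. My first attempt will be a Siegel-type bound showing that every $\mu_i$ is at least $1$. Combined with the trivial count, this bounds these terms by $X^{d-3}$ per triple. The number of relevant triples will then be cut down by the structure of the specific vectors, for instance by showing that when the last coordinates vanish the first coordinates already force $\Lambda$ to be thin in the $a_d$-direction, where $a_d$ is pinned to $\pm mq^2$. If this fails, the fallback is to fix $a_d$ and count the remaining $(d-1)$-tuples in an affine codimension-$2$ slice, giving $\ll X^{d-3}$ per triple, and to recover the needed saving in the sum over $(u_z,u_w)$ from the requirement that $q\mid$ the relevant numerators.
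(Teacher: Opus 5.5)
There is a genuine gap. Your setup is correct: the minor $z(1-v)-w(1-w)=-\Delta_f(z)$ does put $\mf a$ in a rank-$(d-2)$ lattice. But every step that would produce $X^{d-2+\ve}$ is only announced, and the covolume mechanism you propose cannot produce it.

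First, the displayed inequality ignores $a_0\ne0$, and without it the bound is far too weak. For $(z,w,v)=(1,1,v)$ with $v\in\bZ\setminus\{1\}$ and $|v|\le d\sqrt X$, one has $\Delta=v-1\ne0$, while $\Lambda=\{a_0=0,\ a_d+a_{d-2}+\cdots+a_1=0\}$ has bounded covolume. These triples alone contribute $\gg X^{d-3/2}$.

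More seriously, even after imposing $a_d=\sigma mq^2$ and $a_0\ne0$, the number of allowed triples is $\asymp\sum_{m\le X}(X/m)^2\asymp X^2$, and each can contribute through the constant term of the lattice-point bound. For $d=3$, each slice $\Lambda\cap\{a_3=\sigma mq^2\}$ has at most one point, so this gives $X^2$ against the target $X^{1+\ve}$. Your first attempt ($X^{d-3}$ per triple) gives $X^{d-1}$. For $d\ge4$ you would need $\sum 1/\det(\Lambda\cap\{a_d=0\})\ll X^{1+\ve}$ over these $X^2$ triples, which is exactly the claim you defer to an unspecified gcd-index analysis.

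The remark that $v$ ``costs nothing'' is also unjustified. Summing the lattice counts over $v$ just reassembles the codimension-one lattice $\{\psi_{\mf a}(z)=w\}$ and discards the second condition. What must be shown is that few triples are actually realized, and that is an arithmetic statement, not a covolume estimate.

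The missing idea is a divisibility relation for $\Delta_f(z)$. Use your notation $z=u_z/q$, $f(z)=u_w/q$, $f^2(z)=u_v/q$ and put $B=(X/m)^{1/2}$. Divide both orbit relations by $q^2$ (using $a_d=\sigma mq^2$) and eliminate $a_1$. This gives
\begin{equation*}
a_0q^{d-3}D=N,\qquad D:=u_z(u_v-q)-u_w(u_w-q)=q^2\Delta_f(z)\ne0,
\end{equation*}
where
\begin{equation*}
N=\sigma m\,u_zu_w\bigl(u_w^{d-1}-u_z^{d-1}\bigr)+\sum_{i=2}^{d-2}a_iq^{d-i-2}u_zu_w\bigl(u_w^{i-1}-u_z^{i-1}\bigr).
\end{equation*}
Here $N$ does not involve $u_v,a_1,a_0$, and $0<|N|\ll_d X^{(d+1)/2}$. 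Thus $D$ is one of $O_{d,\ve}(X^{\ve})$ signed divisors of $N$, and $D$ determines $u_v$ and then $(a_0,a_1)$. This is the precise sense in which the third orbit point is essentially free.

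You must still save on $q$:
\begin{itemize}
\item For $d\ge4$, view $N$ as a polynomial in $q$. Then $q^{d-3}\mid N(q)$ forces $q$ to divide its lowest nonzero coefficient, leaving $O_{d,\ve}(X^{\ve})$ choices. The total is then $\ll\sum_{m\le X}X^{2\ve}X^{d-3}(X/m)$.
\item For $d=3$, $N$ is independent of $q$, and $q$ is constrained by $q\,u_w\equiv D+u_w^2\pmod{u_z}$. The solution count summed over $(u_z,u_w)$ is $\ll B^2(\log 2B)^2$, because $\sum_{|u_w|\le 3B}\gcd(u_z,u_w)\ll B\tau(|u_z|)$.
\end{itemize}

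Within your own framework for $d=3$, $\Lambda$ is spanned by a primitive vector proportional to $(-q^2D,\ \ast,\ u_zu_w(u_z^2-u_w^2))$. Imposing $a_3=\sigma mq^2$ on its multiples is exactly $a_0D=N$, so the saving comes from divisor counting, not from covolumes.
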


\begin{proof}
Let $f=\psi_{\mf{a}}$, write $a_d=\sigma m q^2$ as above and set $B=(X/m)^{1/2}$. For a point counted by $D_f$, write
\begin{equation*}
z=u/q,\quad f(z)=v/q,\quad f^2(z)=w/q.
\end{equation*}
By Lemma \ref{lem2a}, $1\le q\le B$ and $|u|,|v|,|w|\le dB$. The integer
\begin{equation*}
D:=u(w-q)-v(v-q)=q^2\Delta_f(z)
\end{equation*}
is nonzero. The relations $f(u/q)=v/q$ and $f(v/q)=w/q$ give
\begin{align*}
q^{d-3}\bigl(a_1u-a_0(v-q)\bigr)
&=-\sigma mu^d-\sum_{i=2}^{d-2}a_iq^{d-i-2}u^i,\\
q^{d-3}\bigl(a_1v-a_0(w-q)\bigr)
&=-\sigma mv^d-\sum_{i=2}^{d-2}a_iq^{d-i-2}v^i.
\end{align*}
Eliminating $a_1$, we obtain
\begin{equation*}
a_0q^{d-3}D=N:=\sigma muv(v^{d-1}-u^{d-1})+\sum_{i=2}^{d-2}a_iq^{d-i-2}uv(v^{i-1}-u^{i-1}).
\end{equation*}
Since $D \ne 0$, we have $uv \ne 0$ and $0<|N|\ll_d X^{(d+1)/2}$.
For fixed $\sigma,m,q,u,v,a_2,\ldots,a_{d-2}$, a signed divisor $D$ of $N$ determines at most one $w$ and then at most one pair $(a_0,a_1)$. The number of signed divisors of $N$ is $O_{d,\ve}(X^\ve)$.

% The case d=3
Suppose first that $d=3$. Then $N=\sigma muv(v^2-u^2)$ is independent of $q$. Fix $\sigma,m,u,v$ and a signed divisor $D$ of $N$. Since
\begin{equation*}
qv\equiv D+v^2\pmod{u},
\end{equation*}
the number of possible $q\in[1,B]$ for fixed $\sigma$, $m$, $u$, $v$ and $D$ is
\begin{equation*}
O\left(1+\frac{B\gcd(u,v)}{|u|}\right).
\end{equation*}
Moreover,
\begin{equation*}
\sum_{0<|v|\le 3B}\gcd(u,v)=2\sum_{1\le v\le 3B}\gcd(u,v)\le 2\sum_{\delta\mid u}\delta\#\{1\le v\le 3B:\delta\mid v\}\ll B\tau(|u|).
\end{equation*}
Hence
\begin{equation*}
\sum_{0<|u|,|v|\le 3B}\left(1+\frac{B\gcd(u,v)}{|u|}\right)\ll B^2+B^2\sum_{1\le |u|\le 3B}\frac{\tau(|u|)}{|u|}\ll B^2(\log(2B))^2,
\end{equation*}
where the last inequality follows from
\begin{equation*}
\sum_{n\le 3B}\frac{\tau(n)}n=\sum_{ab\le 3B}\frac{1}{ab}\le\left(\sum_{a\le 3B}\frac{1}{a}\right)^2\ll (\log(2B))^2.
\end{equation*}

Since $0<|N|\ll X^2$, for each fixed $\sigma,m,u,v$, there are $O_\ve(X^{\ve/2})$ possible signed divisors $D$ of $N$. Hence, for fixed $\sigma$ and $m$, the number of possible quadruples $(u,v,D,q)$ is $O_\ve(X^{\ve/2}B^2(\log(2B))^2)$. Since these choices determine at most one triple $(w,a_0,a_1)$, summing over $\sigma\in\{\pm1\}$ and $m\le X$ gives
\begin{equation*}
\sum_{f\in\cP_3(X)}D_f\ll_\ve X^{\ve/2}\sum_{m\le X}B^2(\log(2B))^2\ll_\ve X^{1+\ve/2}(\log X)^3\ll_\ve X^{1+\ve}.
\end{equation*}

% The case d\ge4
Now suppose that $d\ge4$. For fixed
$\sigma,m,u,v,a_2,\ldots,a_{d-2}$, regard $N$ as a polynomial in $q$:
\begin{equation*}
N(q)=c_0+c_1q+\cdots+c_{d-4}q^{d-4}.
\end{equation*}
Since $N \ne 0$, we have $c_j \ne 0$ for some $j$. Let $j_0 \le d-4$ be the lowest value of $j$ satisfying $c_j \ne 0$. Then $q^{d-3}\mid N(q)=q^{j_0}(c_{j_0}+qR(q))$ implies that $q\mid c_{j_0}$. Each coefficient satisfies $|c_j|\ll_d X^{(d+1)/2}$. Hence there are $O_{d,\ve}(X^\ve)$ choices for $q$, and then $O_{d,\ve}(X^\ve)$ choices for $D$.
There are $O_d(X^{d-3})$ choices for $a_2,\ldots,a_{d-2}$ and $O_d(B^2)$ choices for $(u,v)$. Summing over $\sigma\in\{\pm1\}$ and $m\le X$ gives
\begin{equation*}
\sum_{f\in\cP_d(X)}D_f
\ll_{d,\ve} \sum_{m\le X} X^\ve X^\ve X^{d-3} B^2
\ll_{d,\ve} X^{d-3+2\ve}\sum_{m\le X}\frac{X}{m}
\ll_{d,\ve} X^{d-2+2\ve}\log X.
\end{equation*}
Replacing $\ve$ with $\ve/3$ proves \eqref{eq2b}.
\end{proof}

%------------------------------------------------
\subsection{Reduction to fixed points and special orbits}

For $f\in\cP_d$, let
\begin{equation*}
F_f=\#\{z\in\bQ:f(z)=z\},
\quad
T_f=\#\{z\in\bQ\setminus\{0,1\}:f(z)=1-z,\ f(1-z)=z,\ z\ne1/2\}.
\end{equation*}
For $j\in\{0,1\}$, write $I_j(f)=1$ if $f(1)=j$, and $I_j(f)=0$ otherwise. Then $T_f+2I_0(f)$ counts the points in $2$-cycles whose elements sum to $1$, and $F_f,T_f+2I_0(f)\le d$ since the points counted by $F_f$ (resp. $T_f+2I_0(f)$) are distinct roots of the degree-$d$ polynomial $f(z)-z$ (resp. $f(z)+z-1$).
Set
\begin{align*}
R_f&=\begin{cases}
\#\{z\in\bQ:f(z)=1\}-1-\eta_d,&f(1)=1,\\
0,&f(1)\ne1,
\end{cases}\\
U_f&=\begin{cases}
\#\{z\in\bQ\setminus\{1\}:f(z)=0\},&f(1)\in\{0,1\},\\
0,&f(1)\notin\{0,1\}.
\end{cases}
\end{align*}
The quantity $R_f$ is nonnegative: if $f(1)=1$, then $0$ and $1$ are distinct preimages of $1$; when $d=3$, the same condition gives
\begin{equation} \label{eq2d}
f(z)=\frac{a_3}{a_0}(z^3-z)+1,
\end{equation}
so the preimages of $1$ are exactly $0$, $1$ and $-1$.

\begin{lemma} \label{lem2c}
For every $f\in\cP_d$,
\begin{equation} \label{eq2e}
0\le\#\Prep_{\bQ}(f)-F_f-\eta_d I_1(f)-2I_0(f)
\ll_d D_f+R_f+T_f+U_f.
\end{equation}
\end{lemma}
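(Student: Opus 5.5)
We prove the two inequalities in \eqref{eq2e} separately. For the lower bound it suffices to show that the points counted by $\eta_d I_1(f)$ and $2I_0(f)$ are preperiodic, are not fixed, and are distinct. Since $f(0)=1$ for every $f\in\cP_d$, the point $0$ is never fixed. If $f(1)=1$, then $0\mapsto1$ is fixed, so $0$ is preperiodic; when $d=3$, \eqref{eq2d} gives $f(-1)=1\ne-1$, so $-1$ is also preperiodic and not fixed. If $f(1)=0$, then $0\mapsto1\mapsto0$ is a $2$-cycle whose points are not fixed. Since $I_0(f)I_1(f)=0$, there is no double counting, and the lower bound follows.

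For the upper bound, the points with $\Delta_f(z)\ne0$ contribute at most $D_f$, so we only need to control preperiodic $z$ with $\Delta_f(z)=0$. The key identity is
\begin{equation*}
\Delta_f(z)=0\iff z\bigl(f(f(z))-1\bigr)=f(z)\bigl(f(z)-1\bigr).
\end{equation*}
Hence, whenever $f(f(z))\ne1$, the point $z$ is determined by $y=f(z)$ through
\begin{equation*}
z=\phi(y):=\frac{y(y-1)}{f(y)-1}.
\end{equation*}
The plan is to use this backward uniqueness to classify the points with $\Delta_f=0$.

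First consider a point $z$ with $\Delta_f(z)=0$ such that $f(z)=y$ is periodic, $z$ is not periodic, and $f(y)\ne1$. Since $y$ has a periodic preimage $y'$, we get $z=\phi(y)$. If $\Delta_f(y')=0$ as well, then also $y'=\phi(y)$, which contradicts $z\ne y'$. Therefore $y'$ is counted by $D_f$, and each such $y'$ gives at most one such $z$. The remaining case is $f(y)=1$. Here $z(f(1)-1)=y(y-1)$ with $y$ periodic and $f(y)=1$. Either $f(1)=1$, and then $y\in\{0,1\}$, so $z$ is a preimage of $0$ or $1$ and is counted by $U_f$ or $R_f$ (beyond $0,1$ and, for $d=3$, $-1$). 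Or $f(1)\ne1$, and then $z$ is determined by $y$, where $y$ lies among the at most $d$ preimages of $1$; this case should again reduce to $D_f$, $U_f$ or $R_f$ after checking whether $1$ is periodic. Going one step further back, if $f(z')=z$ with $\Delta_f(z')=0$ and $f(z)=1$, the identity gives $z(z-1)=0$. So chains of non-periodic $\Delta=0$ points above $1$ or $0$ terminate after one step, except through $U_f$.

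For a general strictly preperiodic $z$ with $\Delta_f(z)=0$, we apply the same injectivity along its forward orbit. Starting from the periodic part, each step backwards through a $\Delta=0$ point is unique unless the image lies in $f^{-1}(1)$. Thus the non-periodic $\Delta=0$ points inject, up to a factor $O_d(1)$, into $D$-points together with preimages of $0$ and $1$. This bounds them by $\ll_d D_f+R_f+U_f$.

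The main obstacle is the periodic part: we must show that a cycle all of whose points satisfy $\Delta_f=0$ is either a fixed point, a $2$-cycle $\{z,1-z\}$ counted by $T_f$, or the cycle $\{0,1\}$ counted by $I_0(f)$. We must also show that a cycle containing some $D$-point has only $O_d(D_f)$ further points. Along a cycle, the relations $z_{n-1}z_{n+1}-z_n^2=z_{n-1}-z_n$ together with $z_{n-1}=\phi(z_n)$ determine the cycle backwards from any single point. To classify cycles we first look for an invariant of this recurrence, or compare $z_{n-1}=\phi(z_n)$ with the fact that $\phi$ is a rational function of degree $O_d(1)$, in order to force period at most $2$. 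For mixed cycles, if the backward map $\phi$ alone reproduced a long run, the cycle would close up under $\phi$. Using that the equation $\phi(y)=f^{-1}$-data has only $O_d(1)$ solutions, we aim to bound each maximal run of $\Delta=0$ points preceding a $D$-point by $O_d(1)$. This would give \eqref{eq2e}.
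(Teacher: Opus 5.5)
Your lower bound is correct and agrees with the paper. The upper bound has a genuine gap, and it lies exactly in the step you call the main obstacle and leave open.

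\textbf{The missing invariant.} Your recurrence $z_{n-1}(z_{n+1}-1)=z_n(z_n-1)$ says that $t_f(z)=(f(z)-1)/z$ takes the same value at $z_{n-1}$ and $z_n$. Equivalently, $\Delta_f(z)=zf(z)\bigl(t_f(f(z))-t_f(z)\bigr)$ whenever $zf(z)\ne0$. So on any stretch of an orbit avoiding $0$ on which $\Delta_f$ vanishes, $f$ agrees with the affine map $L_\lambda(z)=\lambda z+1$. The only finite cycles of $L_\lambda$ are fixed points and, for $\lambda=-1$, the $2$-cycles $\{z,1-z\}$. Moreover, $t_f$ is a polynomial of degree $d-1$, so such a stretch has at most $d-1$ distinct points.

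Without this invariant you cannot bound periods. Degree considerations for $\phi$ alone cannot force period at most $2$, since bounding rational periods in terms of degree is the open uniform boundedness problem. Nor can you bound the lengths of backward chains. Your backward uniqueness shows only that above each endpoint (a $D$-point or periodic point) there is at most one chain of $\Delta_f=0$ points. All points of that chain map to the same endpoint, so the claimed ``injection up to a factor $O_d(1)$'' presupposes the very chain-length bound that is missing.

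\textbf{The shape of the bound.} The estimate $\ll_d D_f+R_f+T_f+U_f$ forces the difference to be exactly $0$ when $D_f=R_f=T_f=U_f=0$. So every $O_d(1)$ contribution must be shown to vanish in that case. You only gesture at this for preimages of $1$ when $f(1)\ne1$ (``should again reduce to $D_f$, $U_f$ or $R_f$''). You do not address preimages of $0$ when $f(1)\notin\{0,1\}$: these satisfy $\Delta_f=0$ and are not counted by $U_f$.

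\textbf{How the paper handles this.} It avoids the bookkeeping by grouping the nonzero preperiodic points into fibers of $t_f$. Each fiber either contains a $D$-point or a preimage of $0$, or it is $f$-stable. In the $f$-stable case it contains a cycle of $L_\lambda$ counted by $F_f+T_f+2I_0(f)$. This gives the crude bound $\#\Prep_{\bQ}(f)\le(d-1)(D_f+3d)+1$, which settles the lemma whenever $D_f+R_f+T_f+U_f\ge1$.

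When all four quantities vanish, the $L_\lambda$ structure yields the exact identity $\#\Prep_{\bQ}(f)=F_f+\eta_dI_1(f)+2I_0(f)$. For example, if $0$ is preperiodic, then the orbit of $1$ is an $L_{\lambda_1}$-orbit until it reaches $0$, which forces $f(1)\in\{0,1\}$.
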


\begin{proof}
For the lower bound, if $f(1)=1$, then $0$ is a nonfixed preperiodic point; when $d=3$, \eqref{eq2d} gives the additional nonfixed preperiodic point $-1$. If $f(1)=0$, then $0$ and $1$ form a $2$-cycle. Since these two conditions are mutually exclusive, we have
\begin{equation*}
\#\Prep_{\bQ}(f)\ge F_f+\eta_d I_1(f)+2I_0(f).
\end{equation*}

Now we prove the upper bound. For $z\in \Prep_{\bQ}(f)\setminus\{0\}$, set $t_f(z)=(f(z)-1)/z$. Each fiber of $t_f$ contains at most $d-1$ points. If $zf(z)\ne0$, then
\begin{equation} \label{eq2f}
\Delta_f(z)=zf(z)\left(t_f(f(z))-t_f(z)\right).
\end{equation}
The only finite cycles of the affine map $L_\lambda(z)=\lambda z+1$ ($\lambda\in\bQ$) are fixed points, except when $\lambda=-1$, in which case $2$-cycles also occur. It is clear when $\lambda=1$. For $\lambda\ne1$, it follows from 
\begin{equation*}
L_\lambda^m(z)-z=\frac{\lambda^m-1}{\lambda-1}(L_\lambda(z)-z) \quad (m \ge 1).
\end{equation*}
Moreover, when $\lambda\ne0$, the map $L_\lambda$ is injective so every preperiodic point is periodic.

For each nonempty fiber $t_f^{-1}(\lambda)$, either $f(z)$ does not lie in the same fiber for some $z\in t_f^{-1}(\lambda)$, or the fiber contains a cycle of $f$. In the first case, if $f(z)\ne0$, then $z$ is counted by $D_f$ by \eqref{eq2f}; if $f(z)=0$, then $z$ is one of at most $d$ preimages of $0$. In the second case, the cycle is also a cycle of $L_\lambda$ and hence contains a point counted by $F_f+(T_f+2I_0(f))$. Consequently,
\begin{equation*}
\#\Prep_{\bQ}(f)\le(d-1)(D_f+d+F_f+T_f+2I_0(f))+1
\le(d-1)(D_f+3d)+1.
\end{equation*}
If $D_f+R_f+T_f+U_f>0$, then the upper bound of \eqref{eq2e} holds.

Now we assume that $D_f=R_f=T_f=U_f=0$. Suppose first that $0\notin\Prep_{\bQ}(f)$. Then $zf(z) \ne 0$ for every $z \in \Prep_{\bQ}(f)$, so $D_f=0$ and \eqref{eq2f} imply $t_f(f(z))=t_f(z)$. Thus $t_f$ is constant along every preperiodic $f$-orbit. This constant cannot be $0$, since then the $f$-orbit would contain $1$, and $f(0)=1$ would imply that $0$ is preperiodic. Therefore each preperiodic $f$-orbit is a finite orbit of $L_\lambda$ for some $\lambda\ne0$. Since $L_\lambda$ is injective and $T_f=0$, every preperiodic point is fixed. Hence $\#\Prep_{\bQ}(f)=F_f$ and $I_1(f)=I_0(f)=0$.

Suppose now that $0\in\Prep_{\bQ}(f)$ (so $1=f(0)\in\Prep_{\bQ}(f)$) and write $\lambda_1=t_f(1)=f(1)-1$. Until the $f$-orbit of $1$ reaches $0$, it agrees with the orbit under $L_{\lambda_1}$. If the $f$-orbit of $1$ does not contain $0$, the finiteness of the orbit gives $\lambda_1=0$. If $f^n(1)=0$ and $f^m(1) \ne 0$ for all $0 \le m \le n-1$, then $f^n(1)=1+\lambda_1+\cdots+\lambda_1^n=0$ for $\lambda_1 \in \bQ$ so $\lambda_1=-1$. Hence $f(1)\in \{0,1\}$.

If $f(1)=1$, then $U_f=0$ implies that $0$ has no rational preimage. Hence for every $z \in \Prep_{\bQ}(f) \setminus \{0\}$, the $f$-orbit of $z$ does not contain $0$ so $D_f=0$ and \eqref{eq2f} imply that $t_f$ is constant along the orbit.
If this constant is $0$, then $f(z)=1$. Otherwise, the orbit is a finite orbit of $L_\lambda$ for some $\lambda \ne 0$, and if this finite orbit is nonfixed, then $\lambda=-1$ and it is a $2$-cycle counted by $T_f$. Since $R_f=T_f=0$, the only nonfixed preperiodic points are the $\eta_d$ nonfixed preimages of $1$. Thus $\#\Prep_{\bQ}(f)=F_f+\eta_d$.

If $f(1)=0$, then $0$ and $1$ form a $2$-cycle, and $U_f=0$ implies that $1$ is the only rational preimage of $0$. Moreover, if $z \ne 0$ and $f(z)=1$, then $\Delta_f(z)=-z\ne0$ so $D_f=0$ implies that $z \notin \Prep_{\bQ}(f)$. Hence for every $z \in \Prep_{\bQ}(f) \setminus \{0,1\}$, the $f$-orbit of $z$ contains neither $0$ nor $1$. 
By $D_f=0$ and \eqref{eq2f}, the function $t_f$ is constant and nonzero along such an orbit. Therefore the orbit is a finite orbit of $L_\lambda$ for some $\lambda \ne 0$, and $T_f=0$ implies that every preperiodic point $z \ne 0,1$ is fixed. Thus $\#\Prep_{\bQ}(f)=F_f+2$. We conclude that if $D_f=R_f=T_f=U_f=0$, then
\begin{equation*}
\#\Prep_{\bQ}(f)=F_f+\eta_dI_1(f)+2I_0(f). \qedhere
\end{equation*}
\end{proof}

\begin{lemma} \label{lem2d}
We have
\begin{equation*}
\sum_{f\in\cP_d(X)}(R_f+T_f+U_f)\ll_d X^{d-2}\log X.
\end{equation*}
For $d=3$, we have $R_f=T_f=0$ for every $f$.
\end{lemma}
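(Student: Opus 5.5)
The plan is to bound each of $R_f$, $T_f$, $U_f$ by the number of pairs $(f,z)$ where $z$ is a rational point satisfying two independent linear conditions on $\mf{a}$. Each of these quantities is at most $d$, and it is nonzero only if such a point $z$ exists. The points involved are always preperiodic:
\begin{itemize}
\item for $R_f$, $z$ is a preimage of the fixed point $1$;
\item for $T_f$, $z$ lies in a $2$-cycle;
\item for $U_f$, $z$ is a preimage of $0$, and $0$ is preperiodic because $f(0)=1$ and $f(1)\in\{0,1\}$.
\end{itemize}
Since $f(0)=1$, in each case $z\ne0$ (for $T_f$ this is by definition). So Lemma \ref{lem2a} and \eqref{eq2a} apply. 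Writing $a_d=\sigma mq^2$, we get $z=u/q$ with $q\le(X/m)^{1/2}$ and $|u|\le d(X/m)^{1/2}$. The number of choices of $(\sigma,m,q,u)$, which fixes both $a_d$ and $z$, is therefore
\begin{equation*}
\ll_d \sum_{m\le X}\frac{X}{m}\ll X\log X .
\end{equation*}

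Next, for fixed $a_d$ and $z$, I count $\mf{a}$ by fixing all remaining coordinates except two and solving a $2\times2$ linear system for those two.

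For $d\ge4$, fix $a_0$ and $a_{d-2},\ldots,a_3$; this gives $O_d(X^{d-3})$ choices. Then solve for $(a_2,a_1)$.
\begin{itemize}
\item For $R_f$, the conditions are $f(1)=1$ and $f(z)=1$. After dividing the second by $z$, they read $a_d+\cdots+a_2+a_1=0$ and $a_dz^{d-1}+\cdots+a_2z+a_1=0$. The determinant in $(a_2,a_1)$ is $1-z$, which is nonzero because $z\ne1$.
\item For $T_f$, the conditions $f(z)=1-z$ and $f(1-z)=z$ become, after dividing by $z$ and by $1-z$, $a_dz^{d-1}+\cdots+a_2z+(a_1+a_0)=0$ and the same equation with $1-z$ in place of $z$. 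The determinant in $(a_2,a_1)$ is $2z-1$, which is nonzero because $z\ne1/2$.
\item For $U_f$, the conditions are $f(1)=j$ and $f(z)=0$, where $j\in\{0,1\}$. The determinant in $(a_2,a_1)$ is $z-z^2$, which is nonzero because $z\ne0,1$.
\end{itemize}
In each case $(a_2,a_1)$ is determined. Hence each of the three sums is
\begin{equation*}
\ll_d X\log X\cdot X^{d-3}=X^{d-2}\log X .
\end{equation*}

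For $d=3$, the coordinate $a_2$ does not exist, since $a_{d-2}=a_1$. So I will treat the cases separately.
\begin{itemize}
\item $R_f=0$ directly by \eqref{eq2d}.
\item $T_f=0$ for the following reason. The polynomial $f(w)+w-1=(a_3/a_0)w^3+(a_1/a_0+1)w$ is odd, so its nonzero roots have the form $\pm r$. The points $z$ and $1-z$ are distinct nonzero roots, so they would have to be $r$ and $-r$, which forces $1=z+(1-z)=0$, a contradiction.
\item For $U_f$, with $a_3$ and $z$ fixed, the conditions are $a_3+a_1+(1-j)a_0=0$ and $a_3z^3+a_1z+a_0=0$. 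The determinant in $(a_1,a_0)$ is $1-(1-j)z$, which is nonzero because $z\ne1$. So $(a_1,a_0)$ is determined, and the count is $\ll X\log X$.
\end{itemize}

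The main obstacle is making sure that every counted point $z$ really is preperiodic, so that the denominator constraint $q^2\mid a_d$ from Lemma \ref{lem2a} is available. It is this constraint that produces the harmonic sum over $m$ rather than a loss of a full power of $X$. I also need to check each determinant against the exclusions built into the definitions of $R_f$, $T_f$ and $U_f$.
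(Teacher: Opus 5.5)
Your proposal is correct and follows the paper's proof: you write $a_d=\sigma mq^2$, use Lemma \ref{lem2a} to get $O_d(X/m)$ choices of $(q,u)$, and fix $d-3$ further coefficients so that a nonsingular $2\times2$ linear system determines the last two. Your other differences from the paper are cosmetic: you divide by $z$ for $R_f$, you solve for $(a_2,a_1)$ instead of $(a_1,a_0)$ for $U_f$ when $d\ge4$, and for $T_f=0$ at $d=3$ you use the odd-polynomial argument where the paper subtracts the two equations to get $a_3(2z-1)=0$. One small slip: for $R_f$, the fact $z\ne0$ does not follow from $f(0)=1$, since $0$ is itself a preimage of $1$; it follows from the definition of $R_f$, which for $d\ge4$ counts exactly the preimages of $1$ in $\bQ\setminus\{0,1\}$.
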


\begin{proof}
Let $f=\psi_{\mf{a}}$, write $a_d=\sigma mq^2$ as above, and write $z=u/q$ for a point counted by one of the three quantities. Such a point is preperiodic, so \eqref{eq2a} applies. For each $m$, the number of possible pairs $(q,u)$ is $O_d(X/m)$. We show that, once $\sigma,m,q,u$ and $d-3$ of the coefficients $a_{d-2},\ldots,a_0$ are fixed, the remaining two coefficients are uniquely determined in the cases of $R_f$ and $T_f$; for $U_f$, the same holds with $j=f(1)\in{0,1}$ fixed as well.

First we estimate $R_f$. Since $R_f=0$ whenever $f(1) \ne 1$, we assume that $f(1)=1$. For $d=3$, \eqref{eq2d} gives $R_f=0$. For $d \ge 4$, $\eta_d=1$ and $f(0)=f(1)=1$ so
\begin{equation*}
R_f=\#\{z\in\bQ\setminus\{0,1\}:f(z)=1\}.
\end{equation*}
Suppose that $z$ is counted by $R_f$. Then $f(1)=f(z)=1$ gives
\begin{equation*}
a_d+\sum_{i=1}^{d-2}a_i=0,\quad a_dz^d+\sum_{i=1}^{d-2}a_i z^i=0.
\end{equation*}
These equations uniquely determine $a_1$ and $a_2$ in terms of the other coefficients, since the determinant of their coefficient matrix is $z(z-1)\ne0$.
Next suppose that $z$ is counted by $T_f$. Dividing $f(z)=1-z$ and $f(1-z)=z$ by $z$ and $1-z$, respectively, gives
\begin{equation*}
a_dz^{d-1}+\sum_{i=2}^{d-2}a_i z^{i-1}+a_1+a_0=0,\quad
a_d(1-z)^{d-1}+\sum_{i=2}^{d-2}a_i(1-z)^{i-1}+a_1+a_0=0.
\end{equation*}
For $d=3$, subtraction gives $a_3(2z-1)=0$, which is impossible. For $d\ge4$, the determinant of the coefficient matrix of $a_1,a_2$ is $1-2z\ne0$, so these coefficients are uniquely determined by the other coefficients.

Finally, suppose that $z$ is counted by $U_f$ and let $j=f(1)\in\{0,1\}$. Then $f(1)=j$ and $f(z)=0$ give
\begin{equation*}
a_1+(1-j)a_0=-a_d-\sum_{i=2}^{d-2}a_i, \quad za_1+a_0=-a_dz^d-\sum_{i=2}^{d-2}a_i z^i.
\end{equation*}
The determinant of the coefficient matrix of $a_0,a_1$ is $(1-j)z-1\ne0$, so these coefficients are uniquely determined by the other coefficients.
In each case, the other $d-3$ coefficients have $O_d(X^{d-3})$ choices. Summing over $\sigma \in \{\pm 1\}$ and $m \le X$ gives
\begin{equation*}
\sum_{f\in\cP_d(X)}(R_f+T_f+U_f)
\ll_d X^{d-3}\sum_{m\le X}\frac Xm
\ll_d X^{d-2}\log X.\qedhere
\end{equation*}
\end{proof}

Set
\begin{equation*}
F_d(X)=\sum_{f\in\cP_d(X)}F_f,
\quad H_{d,j}(X)=\#\{f\in\cP_d(X):f(1)=j\}\quad(j=0,1).
\end{equation*}
Combining Proposition \ref{prop2b} with Lemmas \ref{lem2c} and \ref{lem2d}, we obtain
\begin{equation} \label{eq2i}
A_d(X)=F_d(X)+\eta_dH_{d,1}(X)+2H_{d,0}(X)
+O_{d,\ve}(X^{d-2+\ve}).
\end{equation}

%------------------------------------------------
\subsection{Counting the main contributions to \texorpdfstring{$A_d(X)$}{Ad(X)}}

For a nonzero rational number $r=x/y$ in lowest terms, write
\begin{equation*}
V_r=V_d(\ell_d(x,y)).
\end{equation*}
For each fixed $r$, we first count vectors $\mf{a} \in \bZ^d$ satisfying $\psi_{\mf a}(r)=r$ without imposing primitivity. Set
\begin{equation*}
N_r(X)=\#\{\mf{a}\in X\cB_d\cap\bZ^d: \ell_d(x,y)\cdot\mf{a}=0,\ a_d\ne0,\ a_0>0\}.
\end{equation*}
For a polynomial $P\in\bR[z]$, let $\|P\|_\infty$ denote the maximum of the absolute values of its coefficients. By Gelfond's lemma \cite[Lemma A.3]{Bug04}, we have
\begin{equation} \label{eq2g}
\|P\|_\infty\|Q\|_\infty\ll_d\|PQ\|_\infty
\end{equation}
for nonzero polynomials $P,Q\in\bR[z]$ of degrees bounded in terms of $d$.

\begin{lemma} \label{lem2e}
We have $V_r\ll_d H(r)^{-d}$. For every $X\ge1$,
\begin{equation} \label{eq2m}
N_r(X)=V_rX^{d-1}
+O_d\left(1+(X/H(r))^{d-2}\right).
\end{equation}
There is a constant $c_d>0$, depending only on $d$, such that $N_r(X)=0$ whenever $H(r)>c_d\sqrt X$.
\end{lemma}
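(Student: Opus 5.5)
We write $\ell=\ell_d(x,y)$ with $r=x/y$ and $H(r)=\max(|x|,y)$. The plan has three parts: bound $\|\ell\|_2$ from below, count lattice points on the hyperplane $\ell^\perp$, and get the vanishing from Lemma \ref{lem2a}.

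\textbf{The bound for $V_r$.} The section $\cB_d\cap\ell^\perp$ is a central section of a box. Its $(d-1)$-volume is therefore bounded by a constant depending only on $d$. It remains to show $\|\ell\|_2\gg_d H(r)^d$. If $|x|\ge y$, the first coordinate gives $|x|^d=H(r)^d$. If $y>|x|$, the last coordinate is $y^{d-1}(y-x)$. Since $\gcd(x,y)=1$ and $x\ne0$, we have $y-x\ge1$ when $x>0$ and $y-x>y$ when $x<0$. This only gives $\gg y^{d-1}$ in general. To reach $y^d$, combine it with the coordinate $xy^{d-1}$: $\max(|xy^{d-1}|,|y^{d-1}(y-x)|)\ge y^{d-1}\cdot y/2$, because $|x|+|y-x|\ge y$. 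Hence $\|\ell\|_2\gg H(r)^d$, and so $V_r\ll_d H(r)^{-d}$.

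\textbf{The asymptotic for $N_r(X)$.} The set $\Lambda=\ell^\perp\cap\bZ^d$ is a lattice of rank $d-1$. Since $\ell$ is primitive, its covolume is $\|\ell\|_2$. We count points of $\Lambda$ in the convex region $X(\cB_d\cap\ell^\perp)$ using the standard Davenport/Lipschitz principle for lattices. Choose a reduced (Minkowski) basis with successive minima $\lambda_1\le\cdots\le\lambda_{d-1}$, so that $\prod\lambda_i\asymp\|\ell\|_2$. The count is then
\begin{equation*}
\frac{\vol_{d-1}(X\cB_d\cap\ell^\perp)}{\|\ell\|_2}+O_d\Bigl(1+\sum_{k=1}^{d-2}\frac{X^k}{\lambda_1\cdots\lambda_k}\Bigr),
\end{equation*}
which is $V_rX^{d-1}+O_d\bigl(1+\sum_{k\le d-2}(X/\lambda_1)^k\bigr)$. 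The conditions $a_d\ne0$ and $a_0>0$ remove the points on the coordinate hyperplanes $a_d=0$ and $a_0=0$ (the box already gives $a_0\ge0$). These are lower-dimensional lattice counts, handled by the same principle with an error of the same shape.

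\textbf{The key step: $\lambda_1\gg_d H(r)$.} This is where Gelfond's lemma \eqref{eq2g} enters, and it is the step I expect to be the main obstacle. A vector $\mf a\in\Lambda\setminus\{0\}$ means that the polynomial $P_{\mf a}(z)=a_dz^d+\sum_{i=1}^{d-2}a_iz^i+a_0-a_0z$ vanishes at $x/y$. The form $\ell\cdot\mf a$ is the homogenization of $a_0\psi_{\mf a}(z)-a_0z$. So $yz-x$ divides $P_{\mf a}$ in $\bZ[z]$, and $P_{\mf a}\ne0$ because $\mf a\ne0$. Gelfond's lemma then gives
\begin{equation*}
H(r)\le\|yz-x\|_\infty\ll_d\|P_{\mf a}\|_\infty\ll\|\mf a\|,
\end{equation*}
so $\lambda_1\gg_d H(r)$. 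This bounds the error by $O_d(1+(X/H(r))^{d-2})$, and it also handles the boundary subcounts.

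\textbf{Vanishing.} Suppose $\mf a$ is counted by $N_r(X)$; it need not be primitive. Divide by $g=\gcd(\mf a)$. This preserves $\ell\cdot\mf a=0$ and produces an element of $\cP_d(X)$ having $r$ as a fixed point, hence as a preperiodic point. Lemma \ref{lem2a} gives $y^2\mid a_d/g$, so $y\le X^{1/2}$, and $|x|\le dy(X/|a_d/g|)^{1/2}\le dX^{1/2}$. Hence $H(r)\le dX^{1/2}$, and $N_r(X)=0$ whenever $H(r)>c_d\sqrt X$ with $c_d=d$.
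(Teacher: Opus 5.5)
Your proof is correct, but it takes a different route from the paper's.

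\textbf{The paper's route.} The paper uses Gauss' lemma and the vanishing $z^{d-1}$-coefficient of $P_{\mf a}$ to write $P_{\mf a}=(yz-x)\bigl(kyz^{d-1}+kxz^{d-2}+\sum_{j\le d-3}b_jz^j\bigr)$. This gives an explicit basis of $\Lambda_r$ with coordinates $(k,b_0,\ldots,b_{d-3})$. Gelfond's lemma then places the pulled-back body $XK_r$ in a box with $|k|\ll_d X/H(r)^2$ and $|b_j|\ll_d X/H(r)$. That single anisotropic box yields all three claims:
\begin{itemize}
\item $V_r\ll_d H(r)^{-2}H(r)^{-(d-2)}$;
\item the error term, via the coordinate form of Davenport's lemma, where the sections $a_d=0$ and $a_0=0$ become $k=0$ and $b_0=0$;
\item the vanishing, from $a_d=ky^2\ne0$, i.e. $|k|\ge1$.
\end{itemize}

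\textbf{Your route.} You use Gelfond only for the basis-free bound $\lambda_1(\Lambda_r)\gg_d H(r)$, and then invoke the successive-minima form of the lattice-point count. This is more conceptual and does not need the special shape of the cofactor. The cost is a heavier counting theorem, and the bound $\lambda_1\gg_d H(r)$ does not see that every lattice vector with $a_d\ne0$ has norm $\gg_d H(r)^2$. On its own it would only give $N_r(X)=0$ once $H(r)\gg_d X$. Combined with Minkowski's second theorem it would only give $\|\ell\|_2\gg_d H(r)^{d-1}$. That is why you correctly needed two separate inputs: the direct estimate $\|\ell\|_2\gg H(r)^d$ for $V_r$, and Lemma~\ref{lem2a} for the vanishing. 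The latter buys you the explicit constant $c_d=d$.

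\textbf{Two minor remarks.} First, $\cB_d$ is not centrally symmetric. All you actually use is that every hyperplane section of $\cB_d\subset[-1,1]^d$ has $(d-1)$-volume $\ll_d1$, which is true. Second, for the boundary subcounts you should say explicitly why the main term of the rank-$(d-2)$ count is $\ll_d(X/H(r))^{d-2}$. The reason is that $\det(\Lambda_r\cap\{a_d=0\})\gg_d\lambda_1(\Lambda_r)^{d-2}\gg_d H(r)^{d-2}$ by Minkowski's second theorem, and likewise for the section $a_0=0$.
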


\begin{proof}
For $\mf{a}=(a_d,a_{d-2},\ldots,a_1,a_0)\in\bZ^d$, let
\begin{equation*}
P_{\mf{a}}(z)
=a_dz^d+\sum_{i=2}^{d-2}a_iz^i+(a_1-a_0)z+a_0 \in \bZ[z].
\end{equation*}
Then $\ell_d(x,y)\cdot\mf{a}=y^d P_{\mf{a}}(x/y)$, so $\ell_d(x,y)\cdot\mf{a}=0$ if and only if $yz-x$ divides $P_{\mf{a}}(z)$. Since the coefficient of $z^{d-1}$ in $P_{\mf{a}}(z)$ is $0$, Gauss' lemma implies that
\begin{equation} \label{eq2j}
P_{\mf{a}}(z)=(yz-x)\left(kyz^{d-1}+kxz^{d-2}+\sum_{j=0}^{d-3}b_jz^j\right)=:(yz-x)Q(z)
\end{equation}
for some $k,b_0,\ldots,b_{d-3}\in\bZ$. In particular, $a_d=ky^2$ and $a_0=-xb_0$. 

Thus \eqref{eq2j} gives a $\bZ$-linear bijection between $\bZ^{d-1}$ and the lattice
\begin{equation*}
\Lambda_r=\{\mf{a}\in\bZ^d:\ell_d(x,y)\cdot\mf{a}=0\}.
\end{equation*}
Since $\ell_d(x,y)$ is primitive, $\Lambda_r$ has covolume $\|\ell_d(x,y)\|_2$. Extend the $\bZ$-linear bijection $\bZ^{d-1} \to \Lambda_r$ to an $\bR$-linear isomorphism $\bR^{d-1}\to\ell_d(x,y)^\perp$, and let $K_r\subset\bR^{d-1}$ be the preimage of $\cB_d\cap\ell_d(x,y)^\perp$ under this isomorphism. Then 
\begin{equation} \label{eq2k}
\vol_{d-1}(K_r)=V_r.
\end{equation}
Since $\mf a\in X\cB_d$, we have $\|P_{\mf a}\|_\infty\ll X$, while $\|yz-x\|_\infty=\max(|x|,y)=H(r)$. By \eqref{eq2g}, 
\begin{equation*}
\|Q(z)\|_\infty =\max(|kx|,|ky|,|b_0|,\ldots,|b_{d-3}|)=\max(|k|H(r),|b_0|,\ldots,|b_{d-3}|) \ll_d X/H(r)
\end{equation*}
so every point $(k,b_0, \ldots, b_{d-3}) \in XK_r$ satisfies
\begin{equation} \label{eq2l}
|k|\ll_d X/H(r)^2,\quad |b_j|\ll_d X/H(r)\;\;(0\le j\le d-3).
\end{equation}
In particular, \eqref{eq2k} gives $V_r\ll_d H(r)^{-d}$.

Since the set $K_r$ is compact and convex, Davenport's lemma \cite{Dav51}, together with \eqref{eq2l}, gives
\begin{align*}
\#(XK_r\cap\bZ^{d-1})
&=\vol_{d-1}(XK_r)+O_d\left(1+(X/H(r))^{d-2}\right) \\
&=V_r X^{d-1}+O_d\left(1+(X/H(r))^{d-2}\right).
\end{align*}
Since $a_d=ky^2$ and $a_0=-xb_0$, the conditions $a_d\ne0$ and $a_0>0$ exclude the coordinate sections $k=0$ and $b_0=0$, respectively. These sections contribute at most the error term above, and hence \eqref{eq2m} follows.

Finally, for every vector counted by $N_r(X)$, we have $a_d=ky^2\ne0$ so $|k|\ge1$. The first bound in \eqref{eq2l} therefore gives the final assertion.
\end{proof}

Let $\mu$ denote the Möbius function, and set
\begin{equation*}
F_d^{\mathrm{all}}(X)=\sum_{r\in\bQ^\times}N_r(X).
\end{equation*}
Every nonzero vector $\mf{a} \in \bZ^d$ has a unique representation $\mf{a}=j\mf{a}'$ for some $j\ge1$ and primitive $\mf{a}' \in \bZ^d$. Hence
\begin{equation*}
F_d^{\mathrm{all}}(X)=\sum_{j\le X}F_d(X/j),
\end{equation*}
and Möbius inversion gives
\begin{equation} \label{eq2h}
F_d(X)=\sum_{j\le X}\mu(j)F_d^{\mathrm{all}}(X/j).
\end{equation}

\begin{proposition} \label{prop2f}
The series $\sum_{r\in\bQ^\times}V_r$ is absolutely convergent. For $d\ge3$, we have
\begin{equation} \label{eq2n}
F_d^{\mathrm{all}}(X)=X^{d-1}\sum_{r\in\bQ^\times}V_r
+\begin{cases}
O(X^{3/2}),&d=3,\\
O(X^2\log X),&d=4,\\
O_d(X^{d-2}),&d\ge5.
\end{cases}
\end{equation}
\end{proposition}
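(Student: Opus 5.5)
Sum the estimate \eqref{eq2m} of Lemma \ref{lem2e} over all $r\in\bQ^\times$. The final assertion of that lemma lets us restrict to $H(r)\le c_d\sqrt X$. The number of $r=x/y$ with $H(r)\asymp H$ is $\asymp H^2$, and this cardinality drives every bound below.

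First, absolute convergence. From $V_r\ll_d H(r)^{-d}$,
\begin{equation*}
\sum_{r\in\bQ^\times}V_r\ll_d\sum_{H\ge1}H\cdot H^{-d},
\end{equation*}
since there are $O(H)$ rationals of height exactly $H$. This converges because $d\ge3$. The tail over $H(r)>c_d\sqrt X$ is $\ll_d\sum_{H>c_d\sqrt X}H^{1-d}\ll X^{(2-d)/2}$. Multiplied by $X^{d-1}$, it contributes $O_d(X^{d/2})$. That is $X^{3/2}$ for $d=3$, $X^2$ for $d=4$, and at most $X^{d-2}$ for $d\ge5$ (since $d/2\le d-2$ when $d\ge4$).

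Next, write
\begin{equation*}
F_d^{\mathrm{all}}(X)=\sum_{H(r)\le c_d\sqrt X}N_r(X)=X^{d-1}\sum_{H(r)\le c_d\sqrt X}V_r+\sum_{H(r)\le c_d\sqrt X}O_d\bigl(1+(X/H(r))^{d-2}\bigr).
\end{equation*}
The constant part of the error is $O(\#\{r:H(r)\le c_d\sqrt X\})=O(X)$, which is acceptable in every case. The second part is
\begin{equation*}
X^{d-2}\sum_{H\le c_d\sqrt X}H\cdot H^{-(d-2)}=X^{d-2}\sum_{H\le c_d\sqrt X}H^{3-d}.
\end{equation*}
For $d\ge5$ the sum converges, giving $O_d(X^{d-2})$. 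For $d=4$ it is harmonic, giving $O(X^2\log X)$. For $d=3$ it is $\sum_{H\le c_3\sqrt X}1\ll\sqrt X$, giving $O(X^{3/2})$. Combining with the tail bound yields \eqref{eq2n}.

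The only delicate point is the bookkeeping when Lemma \ref{lem2e} is applied with $H(r)$ close to $\sqrt X$. There the main term $V_rX^{d-1}$ may be smaller than the error, but this is harmless because we only add error terms. We must also check that the $O_d$ constant in \eqref{eq2m} is uniform in $r$. It is, because Davenport's lemma only needs the projection bounds \eqref{eq2l}, which are uniform. No other obstacle arises; the $d=3$ case is sharp only because of the count of rationals up to height $\sqrt X$.
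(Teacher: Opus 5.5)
Your proposal is correct and follows essentially the same route as the paper: restrict to $H(r)\le c_d\sqrt X$ via the last assertion of Lemma \ref{lem2e}, and sum \eqref{eq2m} using that there are $O(H)$ rationals of height $H$. Then bound the error sums $\sum_{H\le c_d\sqrt X}H$ and $X^{d-2}\sum_{H\le c_d\sqrt X}H^{3-d}$ case by case, and absorb the tail $X^{d-1}\sum_{H(r)>c_d\sqrt X}V_r\ll_d X^{d/2}$ into the error term; your remark that the implied constant in \eqref{eq2m} is uniform in $r$ makes explicit a point the paper uses implicitly.
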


\begin{proof}
There are $O(H)$ elements $r \in \bQ^\times$ with $H(r)=H$, and Lemma \ref{lem2e} gives $V_r\ll_d H(r)^{-d}$. Thus
\begin{equation*}
\sum_{H(r)>T}V_r \ll_d \sum_{H>T} H^{1-d} \ll_d T^{2-d},
\end{equation*}
so $\sum_{r\in\bQ^\times}V_r$ is absolutely convergent. 

By Lemma \ref{lem2e}, $N_r(X)=0$ if $H(r)>c_d\sqrt X$. Hence for $d\ge3$, summing \eqref{eq2m} gives
\begin{equation*}
F_d^{\mathrm{all}}(X)
=X^{d-1}\sum_{H(r)\le c_d\sqrt X}V_r +O_d\left( \sum_{H\le c_d\sqrt X}H +X^{d-2}\sum_{H\le c_d\sqrt X}H^{3-d} \right).
\end{equation*}
The error sum has the order stated in \eqref{eq2n}. Moreover,
\begin{equation*}
X^{d-1}\sum_{H(r)>c_d\sqrt X}V_r
\ll_d X^{d-1}(\sqrt X)^{2-d}
=X^{d/2},
\end{equation*}
which is absorbed in the error term. Thus the truncated sum in the main term can be replaced by the full series, which gives \eqref{eq2n}. Since every $f\in\cP_d$ satisfies $f(0)=1$, the point $0$ is not a fixed point so it need not be included in the sum over $r$.
\end{proof}

Now we combine the preceding estimates to prove Theorem \ref{thm1b} for $d \ge 3$.

\begin{proof}[Proof of Theorem \ref{thm1b} for $d \ge 3$]
By \eqref{eq2h} and \eqref{eq2n}, the main term of $F_d(X)$ is
\begin{equation*}
X^{d-1}\sum_{r\in\bQ^\times}V_r \sum_{j\le X}\frac{\mu(j)}{j^{d-1}}
= X^{d-1}\sum_{r\in\bQ^\times}V_r \left( \frac{1}{\zeta(d-1)}
+O_d(X^{2-d}) \right)
= \frac{X^{d-1}}{\zeta(d-1)}\sum_{r\in\bQ^\times}V_r + O_d(X).
\end{equation*}
For $d=4$, we use the uniform error estimate $O(Y^2\log(2Y))$ for $Y=X/j\ge1$, which is equivalent to the stated error term. Thus the contribution from the error term in \eqref{eq2n} is
\begin{equation*}
\begin{cases}
O(\sum_{j\le X}(X/j)^{3/2})=O(X^{3/2}),&d=3, \\
O(\sum_{j\le X}(X/j)^2\log(2X/j))=O(X^2\log X),&d=4, \\
O_d(\sum_{j\le X}(X/j)^{d-2})=O_d(X^{d-2}),&d\ge5.
\end{cases}
\end{equation*}

Since $\ell_d^{(1)}=\ell_d(1,1)$, the number of vectors $\mf{a}\in X\cB_d\cap\bZ^d$ satisfying $a_d\ne0$, $a_0>0$ and $\psi_{\mf{a}}(1)=1$ is $N_1(X)$. Hence Lemma \ref{lem2e} gives
\begin{equation*}
N_1(X)=V_d(\ell_d^{(1)})X^{d-1}+O_d(X^{d-2}).
\end{equation*}
Similarly, $\psi_{\mf{a}}(1)=0$ if and only if $\ell_d^{(0)}\cdot\mf{a}=0$. Since $\ell_d^{(0)}$ is a primitive vector, applying Davenport's lemma as in Lemma \ref{lem2e} gives
\begin{equation*}
\#\{\mf{a}\in X\cB_d\cap\bZ^d: \ell_d^{(0)}\cdot\mf{a}=0,\ a_d\ne0,\ a_0>0\} =V_d(\ell_d^{(0)})X^{d-1}+O_d(X^{d-2}).
\end{equation*}
In both cases, applying Möbius inversion as in \eqref{eq2h} to impose primitivity, we obtain
\begin{equation*}
H_{d,j}(X)=\frac{V_d(\ell_d^{(j)})}{\zeta(d-1)}X^{d-1}
+\begin{cases}
O(X\log X),&d=3,\\
O_d(X^{d-2}),&d\ge4,
\end{cases}\quad(j=0,1).
\end{equation*}

Every $r\in\bQ^\times$ has a unique representation $r=x/y$ with
$x\ne0$, $y\ge1$ and $\gcd(x,y)=1$, so 
\begin{equation*}
\sum_{r\in\bQ^\times}V_r=\sum_{\substack{x\ne0,\ y\ge1\\ \gcd(x,y)=1}}V_d(\ell_d(x,y)).
\end{equation*}
Combining the preceding estimates with \eqref{eq2i} gives
\begin{equation} \label{eq2o}
A_d(X)=\frac{2^{d-1}}{\zeta(d)}\gamma_d X^{d-1}
+\begin{cases}
O(X^{3/2}),&d=3,\\
O_{d,\ve}(X^{d-2+\ve}),&d\ge4,
\end{cases}
\end{equation}
with $\gamma_d$ as in \eqref{eq1c}. Since $\mf{a}=(0,\ldots,0,1/2)$ is an interior point of $\cB_d$ that satisfies $\ell_d^{(1)}\cdot\mf{a}=0$, we have $V_d(\ell_d^{(1)})>0$, and hence $\gamma_d>0$. Now \eqref{eq1b} and \eqref{eq2o} complete the proof.
\end{proof}

%------------------------------------------------
%------------------------------------------------
\section{Proof of Theorem \ref{thm1b}: the case \texorpdfstring{$d=2$}{d=2}} \label{Sec3}

Let $\mf{a}=(a_2, a_0) \in \bZ_{\prim}^2$ with $a_2 \ne 0$ and $a_0>0$, and set $c=\frac{a_2}{a_0}\in\bQ^\times$ and $L_c(z)=cz$. Then $\psi_{\mf{a}}(z)=cz^2+1$ and 
\begin{equation*}
L_c\circ \psi_{\mf{a}} \circ L_c^{-1}(z)=z^2+c=:f_c(z),
\end{equation*}
so $L_c$ gives a bijection between the sets $\Prep_{\bQ}(\psi_{\mf{a}})$ and $\Prep_{\bQ}(f_c)$. In the rest of this section, we work with $f_c$ in place of $\psi_{\mf{a}}$.
In particular,
\begin{equation} \label{eq3a}
A_2(X)=\#\{(c,z)\in\bQ^\times\times\bQ:
H(c)\le X,\ z\in\Prep_{\bQ}(f_c)\}.
\end{equation}

%------------------------------------------------
\subsection{A surface from seven consecutive orbit points}

\begin{lemma} \label{lem3a}
Let $c\in\bQ^\times$ and suppose that $\Prep_{\bQ}(f_c)\ne\varnothing$. Then there are unique integers $a\ne0$ and $q\ge1$ such that $\gcd(a,q)=1$ and $c=a/q^2$. Moreover, every point in the forward orbit of any $z\in\Prep_{\bQ}(f_c)$ has reduced denominator $q$.
\end{lemma}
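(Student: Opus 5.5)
The plan is a $p$-adic valuation argument for $f_c(z)=z^2+c$, in the same spirit as the proof of Lemma \ref{lem2a}. Write $c=a/b$ in lowest terms with $b\ge1$ and $a\ne0$, and fix a preperiodic point $z\in\Prep_{\bQ}(f_c)$. The whole argument rests on one escape claim, proved prime by prime: if $p$ is a prime and $v_p(z)<\min(0,v_p(c)/2)$, then the forward orbit of $z$ is unbounded $p$-adically. For such $z$ we have $v_p(z^2)=2v_p(z)<v_p(c)$, so $v_p(f_c(z))=2v_p(z)<v_p(z)<0$. The hypothesis $v_p(\cdot)<\min(0,v_p(c)/2)$ is preserved by $f_c(z)$, so iterating gives strictly decreasing valuations. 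That contradicts preperiodicity, since a finite orbit takes only finitely many values.

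With the claim in hand, I treat two cases for a prime $p$.

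\textbf{Case $p\mid b$.} Set $\beta=v_p(b)\ge1$, so $v_p(c)=-\beta<0$.
\begin{itemize}
\item If $v_p(z)<-\beta/2$, the escape claim applies, so this is impossible.
\item If $-\beta/2<v_p(z)$, then $v_p(z^2)>-\beta$, so $v_p(f_c(z))=-\beta$. Now $-\beta<-\beta/2$ (as $\beta\ge1$), so $f_c(z)$ satisfies the escape hypothesis, again a contradiction.
\end{itemize}
Hence $v_p(z)=-\beta/2$ exactly. In particular $\beta$ is even for every $p\mid b$, so $b=q^2$ for a unique $q\ge1$ with $\gcd(a,q)=1$. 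Uniqueness of $(a,q)$ follows from uniqueness of the reduced form of $c$. We have also shown $v_p(z)=-v_p(q)$ for every $p\mid q$.

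\textbf{Case $p\nmid q$.} Here $v_p(c)\ge0$, and the escape claim (whose threshold is now $\min(0,v_p(c)/2)=0$) forces $v_p(z)\ge0$.

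Combining the two cases, the reduced denominator of $z$ is exactly $q$. Every point in the forward orbit of $z$ is again preperiodic, so the same conclusion applies to each of them. This gives the final assertion.

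I expect the only delicate step to be the middle subcase of $p\mid b$: showing that a valuation strictly between $-\beta/2$ and $0$ (or a nonnegative one) still escapes after one iterate. This works because $v_p(f_c(z))=-\beta$ is strictly below $-\beta/2$ once $\beta\ge1$, so the orbit lands in the escaping region after one step.
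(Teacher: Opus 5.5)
Your proposal is correct and follows essentially the same $p$-adic valuation argument as the paper. The paper handles $v_p(c)<0$ by noting that $2v_p(y)\ne v_p(c)$ forces $v_p(f_c^n(y))=2^{n-1}v_p(f_c(y))\to-\infty$; this is the same dichotomy as your two subcases. Your treatment of $v_p(c)\ge 0$ and of uniqueness matches the paper's.
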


\begin{proof}
Choose $z\in\Prep_{\bQ}(f_c)$ and let $y$ be any point in the forward orbit of $z$. Since $y$ is preperiodic, for every prime $p$, the values $|f_c^n(y)|_p$ are bounded as $n$ ranges over the nonnegative integers.
If $v_p(c)\ge0$, then $v_p(y)<0$ would imply $v_p(f_c^n(y))=2^nv_p(y)\to-\infty$ as $n \to \infty$, so $v_p(y)\ge0$. 
If $v_p(c)<0$, then we have $2v_p(y)=v_p(c)$. Indeed, otherwise
\begin{equation*}
v_p(f_c(y))=\min(2v_p(y),v_p(c))<0,
\end{equation*}
so $v_p(f_c^n(y))=2^{n-1}v_p(f_c(y))\to-\infty$ as $n \to \infty$. Thus every negative $v_p(c)$ is even, so $c=a/q^2$ for some integers $a \ne 0$, $q \ge 1$ with $\gcd(a,q)=1$. For every point $y$ in the forward orbit, $2v_p(y)=v_p(c)=-2v_p(q)$ for every prime $p \mid q$ so $y$ has reduced denominator $q$. The conditions $q\ge1$ and $\gcd(a,q)=1$ also show that $a$ and $q$ are unique.
\end{proof}

\begin{lemma} \label{lem3b}
Let $c\in\bQ$ and $z\in\Prep_{\bQ}(f_c)$. Then $|z|\le2\max(1,|c|^{1/2})$.
\end{lemma}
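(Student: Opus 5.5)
We argue by escape to infinity, as in the archimedean half of Lemma \ref{lem2a}. Set $R=2\max(1,|c|^{1/2})$ and suppose that $|z|>R$. We aim to show that $|f_c(z)|>|z|$ and, more strongly, that $|f_c(z)|-|z|$ is bounded below by a positive quantity that does not shrink along the orbit. Once this is established, induction gives a strictly increasing sequence $|z|<|f_c(z)|<|f_c^2(z)|<\cdots$. Then the forward orbit is infinite, contradicting the preperiodicity of $z$.

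The key estimate is the triangle inequality $|f_c(z)|\ge |z|^2-|c|$. Since $|z|>2\max(1,|c|^{1/2})$, we have $|c|\le \max(1,|c|)<|z|^2/4$. Hence
\begin{equation*}
|f_c(z)|\ge |z|^2-|c|>\tfrac34|z|^2=|z|\cdot\tfrac34|z|>|z|\cdot\tfrac32,
\end{equation*}
using $|z|>2$ in the last step. In particular $|f_c(z)|>|z|>R$, so the hypothesis $|w|>R$ holds again for $w=f_c(z)$. Iterating gives $|f_c^n(z)|>(3/2)^n|z|$, which tends to infinity. Therefore the points $f_c^n(z)$ are pairwise distinct, and $z$ cannot be preperiodic. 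This contradiction shows that $|z|\le R$.

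There is no real obstacle here. The only point needing care is to use the bound $\max(1,|c|^{1/2})$ so that both regimes are handled at once: for $|c|\le1$ the threshold is $2$, and for $|c|>1$ it is $2|c|^{1/2}$. In either case the inequalities $|c|<|z|^2/4$ and $|z|>2$ hold. The argument works for any $c\in\bQ$, including $c=0$, and uses only the real absolute value.
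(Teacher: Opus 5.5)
Your proof is correct and is essentially the paper's argument: the same estimate $|f_c(z)|\ge|z|^2-|c|>\frac{3}{4}|z|^2>|z|$, followed by induction along the orbit. The extra growth factor $3/2$ is harmless but not needed, since strictly increasing absolute values already make the orbit points pairwise distinct.
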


\begin{proof}
If $|z|>2\max(1,|c|^{1/2})$, then
\begin{equation*}
|f_c(z)|\ge|z|^2-|c|>\frac{3}{4}|z|^2>|z|.
\end{equation*}
Since $|f_c(z)|>2\max(1,|c|^{1/2})$, the same argument applies inductively and gives $|z|<|f_c(z)|<|f_c^2(z)|<\cdots$. Thus $z$ cannot be a preperiodic point of $f_c$.
\end{proof}

Define polynomials $G_0, G_1,\ldots, G_6 \in\bZ[u,v]$ by
\begin{equation} \label{eq3b}
G_0(u,v)=u,\quad
G_{i+1}(u,v)=G_i(u,v)^2-u^2+v\quad(0\le i\le5).
\end{equation}
Then $G_1(u,v)=v$ and $G_i(z,f_c(z))=f_c^i(z)$ for $0\le i\le6$.
Let $\mc{S}\subset\mf{P}^3_{\bQ}$ be the projective hypersurface in homogeneous coordinates $[q:x_0:x_1:x_6]$ defined by
\begin{equation} \label{eq3c}
q^{31}x_6=q^{32}G_6(x_0/q,x_1/q).
\end{equation}

\begin{lemma} \label{lem3c}
The surface $\mc{S}$ is geometrically integral of degree $32$ and contains exactly six lines over $\overline{\bQ}$. Two lines lie in $q=0$, and the remaining four lines are the closures of the affine lines given on $q=1$ by
\begin{equation} \label{eq3d}
(x_1,x_6)=(x_0,x_0),\,
(-x_0,-x_0),\,
(-x_0-1,x_0),\,
(x_0-1,-x_0).
\end{equation}
\end{lemma}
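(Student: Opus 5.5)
Write $u=x_0/q$, $v=x_1/q$. Since $G_6(u,v)$ has total degree $32$ and its top-degree part is $(v-u^2)^{32}$ up to lower terms, the surface $\mc S$ is the projective closure of the affine graph $x_6=G_6(x_0,x_1)$ in the chart $q=1$, together with the part at infinity. The first step is to verify that the defining form $F=q^{32}G_6(x_0/q,x_1/q)-q^{31}x_6$ has degree exactly $32$ and to identify the degree-$32$ part of $G_6$. Inductively, $G_{i+1}=G_i^2-u^2+v$, so the leading form of $G_i$ for $i\ge1$ is $\ldots$. More precisely, the top form of $G_2$ is $v^2$, of $G_3$ it is $v^4$, and so on up to $v^{32}$ for $G_6$. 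Therefore $F|_{q=0}=x_1^{32}$, and the divisor at infinity is the line $\{q=x_1=0\}$ with multiplicity $32$.

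Geometric integrality comes next. A factorization $F=AB$ over $\overline{\bQ}$ would restrict on $q=1$ to a factorization of $G_6(x_0,x_1)-x_6$. That polynomial is linear in $x_6$ with unit coefficient, hence irreducible. So any nontrivial factor would have to be a power of $q$, which is impossible because $F$ is not divisible by $q$: its restriction to $q=0$ is $x_1^{32}\neq0$.

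Now the lines. A line in $q=0$ must lie in $\{q=x_1=0\}$, which is itself a single line. That does not yet give two lines, so the paper's "two lines in $q=0$" must come from the singular structure. I would therefore recheck the top form carefully: the top form of $G_2$ is $(v-u^2+u^2)^2$, so the degree bookkeeping needs care, because $G_1=v$ has degree $1$ while $u^2$ has degree $2$. Hence $G_2=v^2-u^2+v$ has top form $v^2-u^2$, $G_3$ has top form $(v^2-u^2)^2$, and in general $G_6$ has top form $(x_1^2-x_0^2)^{16}$. At $q=0$ the surface is then $(x_1-x_0)^{16}(x_1+x_0)^{16}=0$, which gives exactly the two lines $q=0$, $x_1=\pm x_0$.

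For affine lines, parametrize $x_0=\alpha+\beta t$, $x_1=\gamma+\delta t$, $x_6=\epsilon+\phi t$ (the case where $x_0,x_1$ are constant gives no line, since $x_6$ is determined). We need $G_6$ to be affine-linear along the line. Setting $g_i(t)=G_i$ on the line, we have $g_{i+1}=g_i^2-g_0^2+g_1$ and we need $\deg g_6\le1$. Comparing leading terms forces $\deg g_i\le1$ at every step. At $g_2$, this requires $g_1^2-g_0^2$ to have degree $\le1$, so $\delta=\pm\beta$. Then the recursion $g_{i+1}-g_1=g_i^2-g_0^2$ forces each $g_i\equiv\pm g_0+\text{const}$, and the constraints become a finite system of polynomial equations in the constants. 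Solving it should give the four families in \eqref{eq3d}: the orbits $z\mapsto z\mapsto z$ (fixed point), $z\mapsto -z\mapsto\cdots$, and the two $2$-cycle-type families.

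The main obstacle is carrying this case analysis through all six iterates cleanly. I would first show that $g_i=\epsilon_i g_0+k_i$ with $\epsilon_i\in\{\pm1\}$, so the step condition becomes $(\epsilon_i g_0+k_i)^2-g_0^2+g_1$ being linear. This forces $k_i$ to satisfy a finite recurrence, and I would then enumerate the resulting cases until they terminate.
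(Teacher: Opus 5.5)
Your route matches the paper's. You identify the top form of $G_6$ to get degree $32$ and the two lines in $q=0$. You get geometric integrality from $x_6-G_6$ being linear in $x_6$ together with $q\nmid F$. For lines meeting $q=1$, degree doubling forces every $G_i$ to be affine along the line, and then $G_2$ gives $x_1=\pm x_0+\mu$. Your first computation of the top form ($v^{32}$, hence $F|_{q=0}=x_1^{32}$ and a single line at infinity) is wrong and should be deleted. The integrality argument must instead use $F|_{q=0}=(x_1^2-x_0^2)^{16}\neq0$, which your later correction supplies.

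The gap is that you stop where the classification actually happens. ``Solving it should give the four families'' is asserted, not shown. You also never check that the four candidates really lie on $\mc{S}$, which is needed for ``exactly six''. In addition, the step condition you propose to iterate is vacuous as phrased: $(\epsilon_i g_0+k_i)^2-g_0^2+g_1=(2\epsilon_ik_i+\epsilon_1)g_0+k_i^2+k_1$ is always affine. The real constraint is that its $g_0$-coefficient be $\pm1$ whenever the next iterate must also be affine.

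The missing computation is short and does not require a long case analysis through six iterates. Write $g_1=\epsilon g_0+\mu$; then $g_2=\epsilon(2\mu+1)g_0+\mu(\mu+1)$. Requiring $g_3$ to be affine forces $(2\mu+1)^2=1$, i.e.\ $\mu\in\{0,-1\}$; this is exactly the paper's step using $G_3$. Then $\mu(\mu+1)=0$, so $g_2=\pm g_0$ and $g_3=g_1$, and the sequence is $2$-periodic: $g_4=g_2$, $g_5=g_1$, $g_6=g_2=\epsilon(2\mu+1)g_0$. This shows at once that only the four choices $(\epsilon,\mu)\in\{\pm1\}\times\{0,-1\}$ survive. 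It also shows that each gives an actual line on $\mc{S}$, with $x_6$ as in \eqref{eq3d}.
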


\begin{proof}
By \eqref{eq3b}, $\deg G_6=32$ and its homogeneous part of degree $32$ is $(v^2-u^2)^{16}$. The defining polynomial in \eqref{eq3c} is the homogenization of the absolutely irreducible polynomial $x_6-G_6(x_0,x_1)$. Thus $\mc{S}$ is geometrically integral of degree $32$, and the reduced intersection $(\mc{S}\cap\{q=0\})_{\text{red}}$ is the union of the two lines $x_1=x_0$ and $x_1=-x_0$ in the plane $q=0$.

Let $L\subset\mc{S}$ be a line that meets the affine chart $q=1$ after base change to $\overline{\bQ}$. On the affine chart $q=1$, choose an affine parameter $s$ on $L$. Then $x_0(s)$, $x_1(s)$ and $x_6(s)$ are affine in $s$. We claim that $G_i(x_0(s),x_1(s))$ is affine in $s$ for every $0\le i\le 6$. It is clear that $G_6(x_0(s),x_1(s))=x_6(s)$ is affine in $s$. If
\begin{equation*}
\deg_s G_i(x_0(s),x_1(s))\ge2
\end{equation*}
for some $0\le i\le5$, then \eqref{eq3b} gives
\begin{equation*}
\deg_s G_{i+1}(x_0(s),x_1(s))=2\deg_s G_i(x_0(s),x_1(s)).
\end{equation*}
Iterating this, we have $\deg_s G_6(x_0(s),x_1(s))>1$, which is a contradiction.

If both $x_0(s)$ and $x_1(s)$ are constant, then $x_6(s)=G_6(x_0(s),x_1(s))$ is also constant, contradicting the fact that $L$ is a line. If exactly one of $x_0(s)$ and $x_1(s)$ is constant, then 
\begin{equation*}
G_2(x_0(s),x_1(s))=x_1(s)^2-x_0(s)^2+x_1(s)
\end{equation*}
has degree $2$ in $s$, which is impossible. Hence both $x_0(s)$ and $x_1(s)$ are nonconstant. Write $x_1=\lambda x_0+\mu$ for some $\lambda,\mu\in\overline{\bQ}$. Then
\begin{equation*}
G_2(x_0,x_1)=(\lambda^2-1)x_0^2+\lambda(2\mu+1)x_0+\mu(\mu+1)
\end{equation*}
is affine, so $\lambda^2=1$. Moreover, 
\begin{equation*}
G_3(x_0,x_1)=(\lambda^2(2\mu+1)^2-1)x_0^2+2\lambda(2\mu+1)\mu(\mu+1)x_0+x_1+\mu^2(\mu+1)^2
\end{equation*}
is affine, so $(2\mu+1)^2=1$. Hence $\lambda\in\{1,-1\}$ and $\mu\in\{0,-1\}$, so $x_1$ is one of $x_0$, $-x_0$, $-x_0-1$ and $x_0-1$. In each case, $x_6=G_6(x_0,x_1)$ is $x_0$, $-x_0$, $x_0$ and $-x_0$, respectively. This proves \eqref{eq3d}.
\end{proof}

Since $x_1=x_0^2+c$, under $z=x_0$ and $c=x_1-x_0^2$ the four affine lines in \eqref{eq3d} correspond to
\begin{equation} \label{eq3e}
c=z-z^2,\quad c=-z-z^2,\quad
c=-z-z^2-1,\quad c=z-z^2-1.
\end{equation}
For a locally closed subset $V\subset\mf{P}^n_{\bQ}$, write
\begin{equation*}
N(V;B)=\#\{P\in V(\bQ):H(P)\le B\},
\end{equation*}
where $H([y_0:\cdots:y_n])=\max_i|y_i|$ for primitive integral coordinates $(y_0,\ldots,y_n)$.

The proof of the next lemma is based on the work of Salberger \cite{Sal23}. By a \emph{geometric line} on $\mc{S}$, we mean a line contained in $\mc{S}_{\overline{\bQ}}$.

\begin{lemma} \label{lem3d}
Let $\mc{S}^\circ$ be the complement of all geometric lines on $\mc{S}$. Then $N(\mc{S}^\circ;B)\ll B$.
\end{lemma}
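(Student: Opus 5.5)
We would deduce Lemma \ref{lem3d} from Salberger's uniform bounds for rational points on projective surfaces \cite{Sal23}. The form we expect to use is this: for a geometrically integral surface $\mc{S}\subset\mf{P}^3_{\bQ}$ of degree $\delta$, the number of rational points of height at most $B$ lying off the geometric lines and off the geometrically integral conics of $\mc{S}$ is $O_\delta(B^{2/\sqrt{\delta}}\,\cdot\,(\text{lower order}))+O_\delta(B)$, with some version of the form $O_{\delta,\ve}(B^{2/\sqrt\delta+\ve}+B)$. Since $\delta=32$ by Lemma \ref{lem3c}, we have $2/\sqrt{32}<1$, so the first term is negligible and the bound off lines and conics is $O(B)$. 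It then remains to show that the conics contribute only $O(B)$ as well.

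Concretely, the steps are as follows.
\begin{enumerate}
\item Record from Lemma \ref{lem3c} that $\mc{S}$ is geometrically integral of degree $32$ and that its geometric lines are exactly the six listed there.
\item Invoke Salberger's theorem to bound the points of $\mc{S}^\circ$ lying outside all geometrically integral conics on $\mc{S}$ by $O(B)$.
\item Bound the contribution of the conics. A conic spans a plane, and each conic on $\mc{S}$ lies in the curve $\mc{S}\cap\Pi$ of degree $32$ for that plane $\Pi$. A single geometrically integral conic defined over $\bQ$ with a rational point has $\asymp B$ points of height at most $B$, not $\ll B^{1/2}$. So we must either classify the conics, as the introduction indicates the paper does, or use a version of Salberger's result whose conic contribution is already uniform, such as bounding the total over conics by $O(B)$ when there are finitely many.
\end{enumerate}

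For step 3 we would classify the conics by the same method as the lines. Parametrize a conic meeting $q=1$ by rational functions of degree $\le 2$; on the affine part, $x_0$ and $x_1$ are then polynomials of degree $\le 2$ in a parameter $s$ after a suitable choice, or rational functions with one pole at infinity. The recursion \eqref{eq3b} doubles degrees once $\deg_s G_i\ge 3$, so every $G_i$ must have degree $\le 2$. Working through $G_2$ and $G_3$ forces $x_1=\lambda x_0+\mu$ or $x_0,x_1$ quadratic with leading terms satisfying $x_1^2-x_0^2$ of low degree. We expect this to show that the only conics are planar curves lying in $q=0$ or in finitely many explicit planes. Finitely many conics contribute $O(B)$ in total, and conics contained in $q=0$ sit inside the two lines there, so they are excluded.

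The main obstacle is step 3. If the classification yields a positive-dimensional family of conics, summing $O(B)$ over the family fails. In that case we would instead use the dimension-growth form of Salberger's bound: for degree $\ge 5$, points on conics of degree-$32$ surfaces not contained in lines number $O(B)$ in total. We would check that the precise statement in \cite{Sal23} includes conics in this way before committing to the classification.
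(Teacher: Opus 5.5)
Your plan has the same architecture as the paper's proof: Salberger's theorems away from lines and conics, plus an explicit treatment of the conics. The gap is that the step carrying all the content, the classification of conics in step 3, is only announced, and the method you sketch would not find the conics that actually occur. Your degree-doubling is formulated with $\deg_s$, which presupposes a polynomial parametrization of the affine part, i.e.\ a conic tangent to the plane $q=0$. A conic not lying in $q=0$ generally meets it in two distinct points, and this is exactly the situation on $\mc{S}$. The only integral conics on $\mc{S}_{\overline{\bQ}}$ are $C_1$ and $C_2$ in \eqref{eq3g}, which meet $q=0$ at $[0:1:\pm1:\mp1]$ and $[0:1:\pm1:\pm1]$, respectively. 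So your analysis would find no conics of the type it handles and suggest there are none, while $C_1$ and $C_2$ each carry $\asymp B$ points. Your fallback does not rescue this, since nothing in Salberger's results bounds the union of a family of conics by $O(B)$. Theorem 0.9(b) only produces $\ll B^{\alpha}\log B$ covering curves. Unless the degree-$2$ ones are known to be among finitely many fixed conics, the per-conic bound $\ll B$ gives only $O(B^{1+\alpha}\log B)$. Finiteness, in practice an explicit list, is indispensable.

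The paper closes this gap by parametrizing a conic $C$ by binary quadratic forms $[Q:X_0:X_1:X_6]$ and setting $X_i=QG_i(X_0/Q,X_1/Q)$. By \eqref{eq3b}, $X_i^2=X_0^2-QX_1+QX_{i+1}$, so descending from $i=5$ each $X_i$ is a quadratic form. This is the correct version of your degree-doubling, with pole orders at both points at infinity replacing $\deg_s$. If $Q,X_0,X_1$ were dependent, $C$ would be the closure of the graph of $G_6$ over a line in the $(u,v)$-plane, which the computation in Lemma \ref{lem3c} excludes. Hence they form a basis, and every $G_i$ is affine in $(u,v)$ on $C$. Writing $G_2=au+bv+r$ and comparing quadratic parts, affineness of $G_3$ forces $ab=0$ and $a^2+b^2=1$. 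Affineness of $G_4$ then restricts $r$ to $\{0,-1\}$, and discarding the reducible cases leaves exactly $C_1$ and $C_2$. For step 2 you should also pin down the input. With $\alpha=3/(2\sqrt{32})$, Theorem 0.9(b) leaves $O(B^{2\alpha+1/4})$ points off the covering curves. Theorem 1.17 bounds the total on curves of degree $\ge3$ by $O(B^{\alpha+2/3}(\log B)^2)$. Both exponents are below $1$; a version of the bound containing a $B^{1+\varepsilon}$ term would not give $N(\mc{S}^\circ;B)\ll B$.
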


\begin{proof}
Let $\mc{C}$ be the set of geometrically integral conics on $\mc{S}$ defined over $\bQ$. We first show that $\mc{C}$ consists of the two conics
\begin{equation} \label{eq3g}
\begin{aligned}
C_1:&\quad x_0^2=x_1^2+2qx_1,\quad x_6=-x_1,\\
C_2:&\quad x_0^2=x_1^2+q^2,\quad x_6=x_1-q.
\end{aligned}
\end{equation}
Let $C\subset\mc{S}_{\overline{\bQ}}$ be an integral conic. By the proof of Lemma \ref{lem3c}, the reduced intersection of $\mc{S}$ with $q=0$ is a union of two lines, so $C$ is not contained in $q=0$. Parametrize $C$ by binary quadratic forms over $\overline{\bQ}$ as
\begin{equation*}
[q:x_0:x_1:x_6]=[Q(s,t):X_0(s,t):X_1(s,t):X_6(s,t)]
\end{equation*}
and set $X_i=QG_i(X_0/Q,X_1/Q)$ for $2\le i\le5$. By \eqref{eq3b}, these rational functions satisfy
\begin{equation*}
X_i^2=X_0^2-QX_1+QX_{i+1}\quad(1\le i\le5).
\end{equation*}
Applying this identity successively for $i=5,\ldots,2$, we see that every $X_i$ is a quadratic form.

We claim that $Q,X_0,X_1$ are linearly independent. Otherwise $C$ would be the projective closure of the graph of $G_6$ over an affine line $\ell$ in the $(u,v)$-plane. The calculation in the proof of Lemma \ref{lem3c} shows that the restriction of $G_6$ to $\ell$ has degree $1$, or the restriction of one of $G_2,G_3$ to $\ell$ has degree $2$. In the latter case, \eqref{eq3b} shows that the restriction of $G_6$ to $\ell$ has degree $32$ or $16$, respectively. Both cases contradict the fact that $C$ is a conic. Hence $Q,X_0,X_1$ are linearly independent so they form a basis of the $\overline{\bQ}$-vector space of binary quadratic forms in $s,t$. 
For each $i$, write
\begin{equation*}
X_i=\alpha_i Q+\beta_i X_0+\gamma_i X_1
\end{equation*}
for some $\alpha_i, \beta_i, \gamma_i \in \overline{\bQ}$. On the affine chart $q\ne0$, set $u=X_0/Q$ and $v=X_1/Q$. Dividing by $Q$, we see that every $G_i(u,v)=X_i/Q$ is affine in $u,v$ on $C$.

Write $G_2(u,v)=au+bv+r$ on $C$, where $a,b,r\in\overline{\bQ}$. Since $G_2(u,v)=v^2-u^2+v$, the projection of $C\cap\{q=1\}$ to the $(u,v)$-plane is an integral conic with equation
\begin{equation*}
v^2-u^2-au+(1-b)v-r=0.
\end{equation*}
Moreover, $G_3(u,v)=(au+bv+r)^2-u^2+v$ is affine on $C$, so its quadratic part is a scalar multiple of $v^2-u^2$, the quadratic part of the equation of $C$. This gives $ab=0$ and $a^2+b^2=1$. Thus $G_2(u,v)$ is either $\sigma u+r$ or $\sigma v+r$, where $\sigma\in\{1,-1\}$. 

If $G_2(u,v)=\sigma u+r$, then $G_3(u,v)=2\sigma r\,u+v+r^2$. Since $G_4(u,v)$ is affine on $C$, comparison of quadratic terms gives $r=0$, and the equation of $C$ becomes $(v-\sigma u)(v+\sigma u+1)=0$. Thus $C$ is reducible, which is a contradiction. If $G_2(u,v)=\sigma v+r$, then
\begin{equation*}
u^2=v^2+(1-\sigma)v-r,\quad
G_3(u,v)=\sigma(1+2r)v+r(r+1).
\end{equation*}
The same comparison gives $(1+2r)^2=1$, so $r\in\{0,-1\}$. If $(\sigma,r)=(1,0)$ or $(-1,-1)$, then $C$ is reducible, which is a contradiction. The remaining cases $(\sigma,r)=(-1,0)$ and $(1,-1)$ give $u^2=v^2+2v$ and $u^2=v^2+1$, respectively. In these cases, \eqref{eq3b} gives $G_6(u,v)=-v$ and $G_6(u,v)=v-1$, respectively, so $C$ is one of the conics in \eqref{eq3g}. Conversely, both conics in \eqref{eq3g} are geometrically integral and satisfy \eqref{eq3c}. This proves that $\mc{C}=\{C_1,C_2\}$.

Set $\alpha=3/(2\sqrt{32})$. By \cite[Theorem 0.9(b)]{Sal23}, there is a finite set $\{D_\lambda:\lambda\in\Lambda\}$ of geometrically integral curves
on $\mc{S}$ of bounded degree, such that $\#\Lambda\ll B^\alpha\log B$ and all but $O(B^{2\alpha+1/4})$ rational points of $\mc{S}$ of height at most $B$ lie on $\bigcup_{\lambda \in \Lambda} D_\lambda$. By \cite[Theorem 1.17]{Sal23}, we have
\begin{equation*}
\sum_{\substack{\lambda\in\Lambda\\ \deg D_\lambda\ge3}}N(D_\lambda;B)
\ll B^{\alpha+2/3}(\log B)^2.
\end{equation*}
Every $D_\lambda$ of degree $2$ is an element of $\mc{C}$, so \cite[Theorem 1.17]{Sal23} gives
\begin{equation*}
\sum_{\substack{\lambda\in\Lambda\\ \deg D_\lambda=2}}N(D_\lambda;B)
\le N(C_1;B)+N(C_2;B)\ll B.
\end{equation*}
The curves $D_\lambda$ of degree $1$ are lines, which do not contribute to $N(\mc{S}^\circ;B)$. We conclude that
\begin{equation*}
N(\mc{S}^\circ;B)\ll B+B^{2\alpha+1/4}+B^{\alpha+2/3}(\log B)^2\ll B. \qedhere
\end{equation*}
\end{proof}

\begin{lemma} \label{lem3e}
The number of pairs in \eqref{eq3a} that satisfy none of the four possibilities in \eqref{eq3e} is $O(\sqrt X)$.
\end{lemma}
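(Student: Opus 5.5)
We map each pair $(c,z)$ counted in \eqref{eq3a} to a rational point on the surface $\mc{S}$ and then apply Lemma \ref{lem3d}. Fix such a pair, with $c\ne0$ and $z\in\Prep_{\bQ}(f_c)$. By Lemma \ref{lem3a} we may write $c=a/q^2$ with $\gcd(a,q)=1$, and every point of the forward orbit of $z$ has reduced denominator $q$. Write
\begin{equation*}
z=x_0/q,\quad f_c(z)=x_1/q,\quad f_c^6(z)=x_6/q.
\end{equation*}
Since $G_6(z,f_c(z))=f_c^6(z)$, the point $P=[q:x_0:x_1:x_6]$ lies on $\mc{S}$. The coordinates are coprime integers because $\gcd(x_0,q)=1$. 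The pair $(c,z)$ is recovered from $P$ via $c=(x_1/q)-(x_0/q)^2$, so the map $(c,z)\mapsto P$ is injective.

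Next we bound the height of $P$. From $H(c)\le X$ we get $q^2\le X$, so $q\le\sqrt X$, and $|c|\le X$. If $q>1$, then Lemma \ref{lem3b} applied to each orbit point gives $|x_i|\le 2q\max(1,|c|^{1/2})\le 2q\,|a|^{1/2}/q=2|a|^{1/2}\le2\sqrt X$. If $q=1$, the same bound $|x_i|\le2\max(1,|c|^{1/2})\le 2\sqrt X$ holds. In either case $H(P)\le 2\sqrt X$.

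Finally we identify which points land on the lines. A geometric line of $\mc{S}$ lies either in $q=0$, which is impossible here since $q\ge1$, or it is one of the four lines of Lemma \ref{lem3c}. By the discussion before \eqref{eq3e}, $P$ lies on one of these four lines exactly when $(c,z)$ satisfies one of the relations in \eqref{eq3e}. Strictly, the relation $(x_1,x_6)=(x_0,x_0)$ implies $c=z-z^2$, and the converse holds because $z$ is then fixed. The other three cases work the same way: for instance $x_1=-x_0$ forces $f_c^2(z)=f_c(-z)=f_c(z)$, so the orbit stabilises and $x_6=-x_0$. Hence every pair satisfying none of the relations in \eqref{eq3e} gives a point of $\mc{S}^\circ(\bQ)$ of height at most $2\sqrt X$.

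By Lemma \ref{lem3d}, the number of such points is $O(\sqrt X)$, which proves the claim. The main obstacle, namely excluding rational points off the lines, was already handled by the Salberger input and the conic classification in Lemma \ref{lem3d}. What remains is the height bookkeeping above: making sure the uniform denominator from Lemma \ref{lem3a} together with Lemma \ref{lem3b} gives height $\ll\sqrt X$ rather than $\ll X$.
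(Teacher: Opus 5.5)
Your argument is the same as the paper's: you map $(c,z)\mapsto[q:x_0:x_1:x_6]\in\mc{S}(\bQ)$ via Lemmas \ref{lem3a} and \ref{lem3b}, check primitivity, injectivity and height at most $2\sqrt X$, note that a point with $q\ge1$ on one of the four affine lines forces the corresponding relation in \eqref{eq3e}, and conclude with Lemma \ref{lem3d}. One step in the height bound is wrong as written, though the conclusion survives: for $q>1$ the inequality $2q\max(1,|c|^{1/2})\le 2|a|^{1/2}$ fails whenever $|a|<q^2$ (for example $(c,z)=(5/36,5/6)$ gives $x_0=5>2\sqrt5$), so instead write $2q\max(1,|c|^{1/2})=2\max(q,|a|^{1/2})\le 2\sqrt X$, which uses both $q^2\le X$ and $|a|\le X$, exactly as the paper does.
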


\begin{proof}
Consider a pair $(c,z)$ counted in \eqref{eq3a}. By Lemma \ref{lem3a}, we have $c=a/q^2$, where $a\ne0$, $q\ge1$, $\gcd(a,q)=1$ and $|a|,q^2\le X$. By Lemmas \ref{lem3a} and \ref{lem3b}, we may write
\begin{equation*}
f_c^i(z)=\frac{x_i}{q},\quad |x_i|\le2\max(q,|a|^{1/2})\le2\sqrt X \;\;(i=0,1,6)
\end{equation*}
for some integers $x_0, x_1, x_6$. Since $G_6(z,f_c(z))=f_c^6(z)$, the point $[q:x_0:x_1:x_6]$ lies on $\mc{S}$. Its coordinates are primitive because $\gcd(q,x_0)=1$, so its height is at most $2\sqrt X$. The map $(c,z) \mapsto [q:x_0:x_1:x_6]$ is injective, since
\begin{equation*}
z=\frac{x_0}{q},\quad
c=\frac{qx_1-x_0^2}{q^2}.
\end{equation*}
Let $\mc{S}^\circ$ be the complement of all geometric lines on $\mc{S}$. By Lemma \ref{lem3c}, the pairs $(c,z)$ satisfying none of the four possibilities in \eqref{eq3e} map to $\mc{S}^\circ$. Lemma \ref{lem3d} bounds their number by $N(\mc{S}^\circ;2\sqrt X)\ll\sqrt X$.
\end{proof}

%------------------------------------------------
\subsection{Counting points on the four lines}

Define
\begin{align*}
N_F(X)&=\#\{z\in\bQ:H(z-z^2)\le X\}=\#\{z\in\bQ:H(-z-z^2)\le X\},\\
N_C(X)&=\#\{z\in\bQ:H(-z-z^2-1)\le X\}=\#\{z\in\bQ:H(z-z^2-1)\le X\}.
\end{align*}
The equalities follow from the bijection $z\mapsto -z$ on $\bQ$. By Lemma \ref{lem3e}, the pairs counted by \eqref{eq3a} that satisfy none of the four possibilities in \eqref{eq3e} contribute $O(\sqrt{X})$. The four possibilities in \eqref{eq3e} contribute $N_F(X),N_F(X),N_C(X),N_C(X)$, respectively, up to $O(1)$ points coming from intersections of distinct lines and from the excluded case $c=0$. Each of the four relations makes $z$ preperiodic under $f_c$. Hence
\begin{equation} \label{eq3h}
A_2(X)=2N_F(X)+2N_C(X)+O(\sqrt X).
\end{equation}

\begin{lemma} \label{lem3f}
Let $\varphi=(1+\sqrt5)/2$. Then
\begin{equation} \label{eq3i}
\begin{split}
N_F(X)&=\frac{X}{\zeta(2)}\left(\frac{\sqrt5}{2}+2\log\varphi\right)+O(\sqrt X\log X),\\
N_C(X)&=\frac{X}{\zeta(2)}\left(\frac12+\frac{2\pi}{3\sqrt3}\right)+O(\sqrt X\log X).
\end{split}
\end{equation}
\end{lemma}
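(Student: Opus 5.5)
We count reduced fractions directly. Write $z=x/y$ with $y\ge1$ and $\gcd(x,y)=1$. Then
\begin{equation*}
z-z^2=\frac{x(y-x)}{y^2},\qquad -z-z^2-1=-\frac{x^2+xy+y^2}{y^2}.
\end{equation*}
Any prime dividing $y$ and $x(y-x)$ would divide $x$, contradicting $\gcd(x,y)=1$. Likewise, any prime dividing $y$ and $x^2+xy+y^2$ would divide $x^2$. So both fractions are already in lowest terms. Hence
\begin{align*}
N_F(X)&=\#\{(x,y)\in\bZ^2:\ y\ge1,\ \gcd(x,y)=1,\ y^2\le X,\ |x(y-x)|\le X\},\\
N_C(X)&=\#\{(x,y)\in\bZ^2:\ y\ge1,\ \gcd(x,y)=1,\ y^2\le X,\ x^2+xy+y^2\le X\}.
\end{align*}
All the defining conditions are homogeneous of degree $2$ in $(x,y)$. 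Therefore both counts are counts of primitive lattice points in $\sqrt X\,\mc{R}$, where
\begin{equation*}
\mc{R}_F=\{(x,y):0<y\le1,\ |x^2-xy|\le1\},\qquad
\mc{R}_C=\{(x,y):0<y\le1,\ x^2+xy+y^2\le1\}.
\end{equation*}
Both regions are bounded.

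\textbf{Step 1 (areas).} For $\mc{R}_F$ with $0<y\le1$, the inequality $x^2-xy\ge-1$ holds automatically, since the minimum of $x^2-xy$ is $-y^2/4>-1$. So the $x$-slice is the interval between the two roots of $x^2-xy-1=0$, which has length $\sqrt{y^2+4}$. Thus
\begin{equation*}
\vol(\mc{R}_F)=\int_0^1\sqrt{y^2+4}\,dy=\frac{\sqrt5}{2}+2\log\varphi.
\end{equation*}
For $\mc{R}_C$, the $x$-slice of $x^2+xy+y^2\le1$ has length $\sqrt{4-3y^2}$. This gives
\begin{equation*}
\vol(\mc{R}_C)=\int_0^1\sqrt{4-3y^2}\,dy.
\end{equation*}
With the substitution $y=\frac{2}{\sqrt3}\sin\theta$, this integral evaluates to $\frac12+\frac{2\pi}{3\sqrt3}$. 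These are exactly the constants in \eqref{eq3i}.

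\textbf{Step 2 (all lattice points).} For $Y\ge1$, I would show
\begin{equation*}
\#(Y\mc{R}\cap\bZ^2\cap\{y\ge1\})=\vol(\mc{R})Y^2+O(Y).
\end{equation*}
Each region is a union of horizontal slices that are single intervals, and for each integer $1\le y\le Y$ the number of integer $x$ in the slice equals the slice length plus $O(1)$. Summing over $y$ replaces the sum of slice lengths by the integral. This costs $O(Y)$, because the slice-length function is monotone and bounded on $[0,1]$: $\sqrt{y^2+4}$ is increasing and $\sqrt{4-3y^2}$ is decreasing. Equivalently, one can apply Davenport's lemma as in Lemma \ref{lem2e}, since the regions are bounded by finitely many algebraic arcs.

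\textbf{Step 3 (Möbius inversion for coprimality).} I would impose $\gcd(x,y)=1$ by summing $\mu(j)$ over $j\le\sqrt X$ applied to the count in $(\sqrt X/j)\mc{R}$. The condition $y\ge1$ is preserved under scaling by $j$, and $\mc{R}$ does not meet $y=0$. The main term becomes
\begin{equation*}
\vol(\mc{R})X\sum_{j\le\sqrt X}\mu(j)j^{-2}=\frac{\vol(\mc{R})X}{\zeta(2)}+O(\sqrt X).
\end{equation*}
The error terms sum to $\sum_{j\le\sqrt X}O(\sqrt X/j)=O(\sqrt X\log X)$. This proves \eqref{eq3i}.

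I expect no serious obstacle. The only points needing care are confirming that the reduced-form height is exactly $\max(|\text{numerator}|,y^2)$, which is the coprimality checked at the start, and evaluating the second area integral correctly.
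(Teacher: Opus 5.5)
Your proposal is correct and follows the paper's proof essentially step for step. Both arguments use the exact height formulas to reduce to primitive lattice points in $\sqrt X\,\Omega$, count all lattice points with an $O(\sqrt X)$ error, impose coprimality by M\"obius inversion, and evaluate the two area integrals. Your version fills in details the paper leaves implicit, namely the lowest-terms check and the slicing/Riemann-sum justification of the lattice-point estimate.
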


\begin{proof}
Write $z=r/s \in \bQ$ in lowest terms with $s>0$. By direct computation, we have
\begin{equation*}
H(z-z^2)=\max(|r(s-r)|,s^2),\quad
H(-z-z^2-1)=\max(r^2+rs+s^2,s^2).
\end{equation*}
Thus $N_F(X)$ (resp. $N_C(X)$) counts primitive lattice points in $\sqrt{X}\Omega_F$ (resp. $\sqrt{X}\Omega_C$), where
\begin{align*}
\Omega_F&=\{(u,v)\in\bR^2:0<v\le1,\ |u(v-u)|\le1\},\\
\Omega_C&=\{(u,v)\in\bR^2:0<v\le1,\ u^2+uv+v^2\le1\}.
\end{align*}
These are bounded regions with piecewise smooth boundary.

For $\Omega \in \{\Omega_F, \Omega_C\}$, the standard lattice-point estimate gives
\begin{equation*}
\#(\bZ^2\cap \sqrt{X}\Omega)=\vol_2(\Omega)X+O(\sqrt{X}).
\end{equation*}
Applying Möbius inversion as in \eqref{eq2h} to impose primitivity, we obtain
\begin{equation*}
N_F(X)=\frac{\vol_2(\Omega_F)}{\zeta(2)}X+O(\sqrt X\log X),
\quad N_C(X)=\frac{\vol_2(\Omega_C)}{\zeta(2)}X+O(\sqrt X\log X).
\end{equation*}
We finish the proof by computing the volumes of the two regions:
\begin{equation*}
\vol_2(\Omega_F)=\int_0^1\sqrt{v^2+4}\,dv =\frac{\sqrt5}{2}+2\log\varphi, \quad
\vol_2(\Omega_C)=\int_0^1\sqrt{4-3v^2}\,dv =\frac12+\frac{2\pi}{3\sqrt3}. \qedhere
\end{equation*}
\end{proof}

\begin{proof}[Proof of Theorem \ref{thm1b} for $d=2$]
Substituting \eqref{eq3i} into \eqref{eq3h}, we obtain
\begin{equation*}
A_2(X)=\frac{2}{\zeta(2)}\left(\varphi+2\log\varphi+\frac{2\pi}{3\sqrt3}\right)X+O(\sqrt X\log X).
\end{equation*}
Dividing by \eqref{eq1b} with $d=2$ completes the proof.
\end{proof}

%------------------------------------------------
\bigskip
\section*{Acknowledgments}
The author was supported by the National Research Foundation of Korea (NRF) grant funded by the Korea government (MSIT) (No. RS-2024-00334558 and No. RS-2025-02262988).

%------------------------------------------------
\bigskip
\section*{Statement on AI use}

OpenAI's ChatGPT 5.6 and 6 Pro generated the initial proof of Theorem \ref{thm1b} through an iterative dialogue with the author. 
In an earlier version of the paper, ChatGPT 5.6 Pro produced the weaker bounds $M_d(X)\ll_d X^{-1}\exp(C\sqrt{\log X/\log\log X})$ for every real number $C>4\sqrt{2}$ when $d\ge3$, and $M_2(X)\ll X^{-1}\log X$. The author then extended the argument based on three consecutive orbit points to four consecutive orbit points, which removed the factor $\log X$ and gave $M_2(X)\ll X^{-1}$. Subsequently, ChatGPT 6 Pro generated the proof of the stronger asymptotic formula stated in Theorem \ref{thm1b}.
The author formulated the project, evaluated and revised the model's suggestions, independently verified all mathematical arguments, and wrote the final manuscript. The author takes full responsibility for the correctness of the results.

%------------------------------------------------

\end{document}